\theoremstyle{plain}
\newtheorem{Pocz}{Poczatek}[section]
\newtheorem{Proposition}[Pocz]{Proposition}
\newtheorem{Theorem}[Pocz]{Theorem}
\newtheorem{Corollary}[Pocz]{Corollary}
\newtheorem{Lemma}[Pocz]{Lemma}
\theoremstyle{definition}
\newtheorem{Definition}[Pocz]{Definition}
\theoremstyle{remark}
\def\NN{{\mathbb N}}
\numberwithin{equation}{section}
\title[
Partitions of unity and coverings
]%
  {Partitions of unity and coverings}
\author{Kyle ~Austin}
\address{University of Tennessee, Knoxville, USA}
\email{kaustin9@utk.edu}
\author{Jerzy ~Dydak}
\address{University of Tennessee, Knoxville, USA}
\email{jdydak@utk.edu}
\date{ \today
}
\keywords{Partitions of unity and coverings}
\subjclass[2000]{Primary 54F45; Secondary 55M10}
\begin{document}
\maketitle
%\begin{center}
%\today
%\end{center}

\begin{abstract}
The aim of this paper is to prove all well-known metrization theorems using partitions of unity. To accomplish this, we first discuss sufficient and necessary conditions for existence of $\mathcal{U}$-small partitions of unity (partitions of unity subordinated to an open cover $\mathcal{U}$ of a topological space $X$). 
\end{abstract}

\section{Introduction}
The purpose of this paper is to further establish and demonstrate the utility of the calculus of partitions of unity. The setting we chose to this was to prove all the well known metrization theorems in a coherent way which is accessible to students. The coherence stems from the utility of forming partitions of unity on collections of discretely distributed sets. Partitions of unity, viewed as functions to  $l_1(S)$ or simplicial complexes, offer both a geometric and analytical standpoint when working with covers.

It is well known that an open covering $\mathcal{U}$ of a topological space $X$ can be intimately associated with simplicial complexes via mapping $X$ to the nerve of the covering $\mathcal{N(U)}$. The goal of this paper is to display how fundamental this association is. In particular, we stress the relationship between barycentric subdivision with star refinements and covers with $\sigma-$discrete refinements to countable joins of simplicial complexes. It seems to the authors that the correspondence between joins and discreteness of covers is an essential viewpoint when viewing the metrization theorems. 

Traditionally partitions of unity are thought of as a collections of maps $\{f_s:X\to [0,1]: s \in S\}$ so that $\sum\limits_{s\in S}f_s(x)=1$ for every $x\in X$. There is another way to view partitions of unity that proves convenient in many proofs and fundamental in certain cases.

Let $S$ be a nonempty set. $\Delta(S)$ is defined to be the subspace of $l_1(S)$ of functions $f:S\to [0,1]$ such that $\| f(x) \|_{l_1} = 1$ for every $x\in X$. A partition of unity can be defined as a map $f:X\to l_1(S)$ for which the image of $X$ is contained in $\Delta(S)$. This definition is motivated by observation that a partition of unity $\{f_s:X\to [0,1]:s\in S\}$ determines a unique function $f:X \to \Delta(S)$ where $x \mapsto f_x$ and $f_x:S\to [0,1]$ is defined by $f_x(s) = f_s(x)$.

One way of creating partitions of unity on a space $X$ is by normalizing partitions of positive and finite functions $g: X \to (0,\infty)$.

\begin{Definition}\label{NormalizationDef}
Suppose $\{f_s\}_{s\in S}$ is a family of functions $f_s:X\to [0,\infty)$ such that $g=\sum\limits_{s\in S}f_s$ maps $X$ to $(0,\infty)$ and is continuous. The \textbf{normalization} of $\{f_s\}_{s\in S}$ is the partition of unity $\{\frac{f_s}{g}\}_{s\in S}$.
\end{Definition}

\begin{Definition}
Suppose $\mathcal{U}$ is a cover of a topological space $X$. A partition of unity $f=\{f_s\}_{s\in S}$
is called \textbf{$\mathcal{U}$-bounded} (or \textbf{$\mathcal{U}$-small}) if for each $s\in S$
the carrier $f_s^{-1}(0,1]$ of $f_s$ is contained in a subset of $\mathcal{U}$. The carriers $f_s^{-1}(0,1]$ are sometimes reffered to as the \textbf{cozero sets} of the partition.

Given $s \in S$ define the star of $s$ as $st(s) = \{f\in l_1(S):f(s) \neq 0\}$. An alternative definition of a partition of unity $f:X \to l_1(S)$ being \textbf{$\mathcal{U}$-bounded} (or \textbf{$\mathcal{U}$-small}) is that for each $s\in S$
the point inverse of the star $f^{-1}(st(s))$ is contained in a subset of $\mathcal{U}$.

\end{Definition}

For a given space $X$ the existence of $\mathcal{U}$-small partitions of unity for coverings $\mathcal{U}$ is an essential question in geometric topology. In particular, the existence of extensions of partitions of unity defined on closed subspaces is fundamental in general topology, see \cite{DyExtTheory}. 

We are grateful to Szymon Dolecki for asking which metrization theorems can be derived from partitions of unity.

\section{A Unifying Concept}
The purpose of this section is to introduce how we are using partitions of unity to unify discreteness and joins of complexes. A consequence is an alternative formulation and proof of Urysohn's Metrization Theorem. The authors feel that the alternate formulation and proof offer more geometric intuition for metric spaces and, in particular, of the very important metric space $l_1$. Methods in this section forshadow those used throughout the exposition. Consider the classical formulation of Urysohn's Metrization Theorem:

\begin{Theorem}[Urysohn's Metrization Theorem Version 1]
A second countable space is metrizable if and only if it is normal.
\end{Theorem}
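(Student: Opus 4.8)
The plan is to prove the two implications separately, with essentially all of the work concentrated in the ``normal $\Rightarrow$ metrizable'' direction, which I would carry out by using a countable family of Urysohn functions to build a single partition of unity and then reading off a metric from the induced map into $l_1$.

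For the easy direction, suppose $X$ is metrizable with metric $d$ and let $A,B$ be disjoint closed sets. Then $x\mapsto \frac{d(x,A)}{d(x,A)+d(x,B)}$ is a well-defined continuous function separating $A$ from $B$, whence the preimages of $[0,1/2)$ and $(1/2,1]$ are disjoint open sets containing $A$ and $B$; thus $X$ is normal. This direction uses nothing about second countability. For the hard direction, assume $X$ is second countable and normal; since normal is taken to include $T_1$, $X$ is in particular regular. Fix a countable base $\mathcal{B}=\{B_n : n\in\NN\}$ and let $P\subseteq\NN\times\NN$ be the countable set of pairs $p=(n,m)$ with $\overline{B_n}\subseteq B_m$. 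For each $p=(n,m)\in P$ I would invoke Urysohn's Lemma (valid in any normal space) to obtain a continuous $g_p:X\to[0,1]$ with $g_p\equiv 1$ on $\overline{B_n}$ and $g_p\equiv 0$ off $B_m$. The key separation property is that, for every $x\in X$ and every open $V\ni x$, regularity together with the base produces basic sets with $x\in B_n\subseteq\overline{B_n}\subseteq B_m\subseteq V$, so that $p=(n,m)\in P$ satisfies $g_p(x)=1$ while $g_p$ vanishes off $V$; in particular the cozero sets $g_p^{-1}(0,1]$ form a base for $X$.

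Next I would assemble these into a partition of unity. Enumerate $P=\{p_1,p_2,\dots\}$, put $w_{p_i}=2^{-i}$, and set $f_p=w_p g_p$. The sum $g=\sum_{p\in P}f_p$ is continuous by the Weierstrass $M$-test, since the terms are dominated by the summable weights $w_p$, and by the separation property every $x$ lies in some cozero set, so $g(x)>0$; thus $g$ maps $X$ into $(0,\infty)$. By Definition \ref{NormalizationDef} the normalization $f=\{f_p/g\}_{p\in P}$ is a partition of unity, giving a map $f:X\to\Delta(P)\subseteq l_1(P)$. Because $P$ is countable, $l_1(P)$ is a genuine metric space, so once I show $f$ is a topological embedding the conclusion that $X$ is metrizable is immediate.

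The main obstacle, and the step I would treat most carefully, is verifying that the induced map is an embedding rather than merely a continuous injection. Injectivity follows from $T_1$ and the separation property: given $x\neq y$, choose a basic neighborhood $V$ of $x$ missing $y$ and a pair $p$ with $g_p(x)=1$, $g_p(y)=0$. For openness onto the image it is cleanest to argue first with the unnormalized family $h=(f_p)_{p\in P}:X\to l_1(P)$: given open $V\ni x$, pick $p$ as above with $f_p(x)=w_p$ and $f_p\equiv 0$ off $V$, so $\|h(x)-h(z)\|_{l_1}\ge w_p$ for every $z\notin V$; hence $h$ is open onto its image, and the same separation estimate transfers to $f$ after normalization. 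Conceptually this realizes the point stressed in the introduction: each $g_p$ is a map to the nerve of a two-element cover, and taking the weighted $l_1$-combination over the countable index set $P$ is precisely the passage to a countable join, which carries $X$ into the metric space $l_1$.
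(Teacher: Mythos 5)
Your proposal is correct and takes essentially the same route as the paper, which proves Version 1 via Version 2: a countable family of Urysohn functions whose cozero sets form a base (obtained from pairs of basic sets with $\overline{B_n}\subseteq B_m$, exactly as you do) is weighted by $2^{-n}$ and assembled into a map to $l_1$ of a countable index set, shown to be an embedding by the same three steps --- continuity via tail estimates, injectivity, and the separation property for openness onto the image. The only cosmetic difference is that you normalize so the image lands in $\Delta(P)$, whereas the paper's map $x\mapsto \bigl(f_n(x)/2^n\bigr)_{n}$ is left unnormalized; as you note, the separation estimate survives normalization, so nothing changes.
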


A traditional method of proof for this theorem is through embedding into a product of intervals using Tychonoff's Theorem. We offer a different approach using partitions of unity in a way that acts as a prelude to Dydak's Metrization Theorem \ref{DydMetrThm} (see also the book \cite{Dol} of Sz.Dolecki). 

\begin{Theorem}[Urysohn's Metrization Theorem Version 2]\label{Version 2}
A second countable space embeds into $l_1(\NN)$ if and only if $X$ is normal.
\end{Theorem}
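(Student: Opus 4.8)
The plan is to treat the two implications separately. The implication from embeddability to normality is a brief topological observation, whereas the converse carries the real content of Urysohn's argument, which I would recast using a countable family of Urysohn functions assembled into a map to $l_1(\NN)$ (equivalently, into $\Delta(\NN)$ after normalization as in Definition \ref{NormalizationDef}).

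For the direction ``embeds $\Rightarrow$ normal'' I would observe that $l_1(\NN)$ is a metric space and that metrizability is hereditary: the restriction of the $l_1$-metric to any subspace induces the subspace topology. Since every metric space is normal and normality is a topological invariant, any space homeomorphic to a subspace of $l_1(\NN)$ is normal. This half uses neither second countability nor partitions of unity; it also shows that the embedding hypothesis forces the $T_1$ property, which I will quietly use in the converse.

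For ``normal $\Rightarrow$ embeds,'' I would begin with a countable basis $\{B_n\}_{n\in\NN}$ supplied by second countability. Using regularity (which follows from normality together with the $T_1$ property forced above), I would collect the countable set of pairs $(m,n)$ with $\overline{B_m}\subseteq B_n$, and for each such pair apply Urysohn's lemma to obtain a continuous $g_{m,n}:X\to[0,1]$ equal to $1$ on $\overline{B_m}$ and to $0$ off $B_n$. Re-indexing these countably many functions as $\{g_k\}_{k\in\NN}$, I would set $f(x)=(2^{-k}g_k(x))_{k\in\NN}$. The geometric weights guarantee $f(x)\in l_1(\NN)$, and since the tail $\sum_{k>N}2^{-k}g_k(x)\le\sum_{k>N}2^{-k}$ goes to $0$ uniformly in $x$, the map $f$ is a uniform limit of continuous maps into $l_1$ and hence continuous into $l_1(\NN)$, not merely into the product topology. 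One may equally normalize the family $\{2^{-k}g_k\}$ to land inside $\Delta(\NN)$, which is the thematically native formulation for this paper.

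The step I expect to be the crux is verifying that $f$ is an embedding rather than a mere continuous injection, and this reduces to the claim that $\{g_k\}$ separates points from closed sets. Given $x$ and an open $U\ni x$, regularity and the countable basis furnish a pair with $x\in B_m\subseteq\overline{B_m}\subseteq B_n\subseteq U$, so the corresponding coordinate $g_{k_0}$ satisfies $g_{k_0}(x)=1$ while $g_{k_0}(y)=0$ for every $y\notin U$. Injectivity is immediate from the $T_1$ case of this observation. Continuity of $f$ is a soft consequence of uniform summability, whereas openness of $f$ onto its image is what genuinely uses the separating functions: from $g_{k_0}(x)=1$ and $g_{k_0}(y)=0$ one gets $\|f(y)-f(x)\|_{1}\ge 2^{-k_0}$ whenever $y\notin U$, so the $l_1$-ball of radius $2^{-k_0}$ about $f(x)$ meets $f(X)$ only in $f(U)$. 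This last estimate is the heart of the proof, and I would organize the writeup so that the whole argument hangs on it.
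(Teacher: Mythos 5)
Your proposal is correct and follows essentially the same route as the paper: both build a countable family of Urysohn functions whose cozero sets form a basis, weight the $k$-th one by $2^{-k}$ to land in $l_1(\NN)$, prove continuity by a uniform tail estimate, and get injectivity and openness onto the image from the separating property. Your writeup is somewhat more explicit than the paper's (you actually construct the separating family from pairs $\overline{B_m}\subseteq B_n$, quantify openness via $\|f(x)-f(y)\|_1\ge 2^{-k_0}$, and supply the easy necessity direction the paper omits); the one caveat is that, just as the paper silently appeals to ``Hausdorff considerations,'' your use of $T_1$ in the sufficiency direction is an unstated separation hypothesis and cannot literally be imported from the necessity direction as you suggest.
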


\begin{proof}[proof of Version 2]
Notice that $X$ being normal implies there exists a collection of maps $\Delta$ from $X$ into $[0,1]$ so that $\{f^{-1}((0,1]): f \in \Delta\}$ form a basis for $X$. Notice also that $\Delta$ can be chosen to be countable since $X$ is second countable. Denote the elements of $\Delta$ by $f_1,f_2,f_3,\ldots$
Consider the function $f:X \to l_1(\NN)$ by $x \mapsto f_x$ where $f_x(n) = \frac{f_n(x)}{2^n}$. 

To see that $f$ is continuous, observe that $\sum\limits_{n \ge 1}f_n \leq 1$ is finite and for each $\epsilon >0$ there exists a natural $M>0$ so that $\sum\limits_{M < n}f_n(y) < \frac{\epsilon}{4}$  for all $y \in X$. $f_n$ is a continuous function for each $n \leq M$, so there exists a $\delta >0$ so that $|f_n(x)-f_n(y)|< \frac{\epsilon}{2M}$ for all $y\in B(x,\delta)$ and all $n\leq M$. Notice that $|f_x-f_y|_{l_1} = \sum\limits_{n \ge 1} |f_n(x) - f_n(y)| = \sum\limits_{n \ge 1}^{M} |f_n(x) - f_n(y)|+\sum\limits_{M < n} |f_n(x) - f_n(y)| < \sum\limits_{n=1}^M\frac{\epsilon}{2M} + 2\sum_{M < n} f_n(x) \leq \frac{\epsilon}{2} + \frac{\epsilon}{2} = \epsilon$ for each $y \in B(x,\delta)$. 

To see that this map is one-to-one, note that for $x \neq y$ there must exist, by Hausdorff considerations, $n\ge1$ so that $f_n^{-1}((0,1])$ contains $x$ but not $y$. Note that $f_y(n)= 0 \neq f_x(n)$.

This map is an embedding as it is continuous and the preimage of a basis refines a basis by assumption.
\end{proof}

Notice that $X$ can inherit a metric when considered as a subspace of $l_1(S)$ defined as $d(x,y) = ||f_x-f_y||= \sum\limits_{s\in S}|f_x(s)-f_y(s)|$. The second author noticed that this same technique could be used to define metrics on spaces given a partition of unity whose carriers form a basis. Urysohn's Metrization Theorem could be proven in a class setting and stand as motivation to prove the following metrization theorem which was introduced in \cite{DyPU}. 

\begin{Theorem}[Dydak's Metrization Theorem \cite{DyPU}]\label{DydMetrThm}
A space $X$ is metrizable if and only if for there exists a continuous partition of unity $f:X\to l_1(V)$ so that $\{f^{-1}(st(v)): v \in V\}$ forms a basis for $X$. 
\end{Theorem}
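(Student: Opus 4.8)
The plan is to prove both directions of Dydak's Metrization Theorem. The easy direction is sufficiency: given such a partition of unity $f\colon X\to l_1(V)$ whose stars pull back to a basis, I would simply pull back the $l_1$-metric, mimicking the remark following the proof of Version 2. Setting $d(x,y)=\|f_x-f_y\|_{l_1}=\sum_{v\in V}|f_x(v)-f_y(v)|$ gives a pseudometric; it is a genuine metric because if $x\neq y$ then some star $st(v)$ separates them (the basis condition forces $x\in f^{-1}(st(v))$ but, shrinking, $y\notin f^{-1}(st(v))$), so $f_x(v)\neq f_y(v)$ and $d(x,y)>0$. The remaining point is that this metric topology agrees with the original topology of $X$: continuity of $f$ shows the metric balls are open, and the hypothesis that $\{f^{-1}(st(v))\}$ is a basis shows conversely that metric balls can be refined by original-open sets. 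I would make this matching precise by checking that each $f^{-1}(st(v))$ contains, and is contained in, a suitable metric ball around its points, using the fact that near a point $x$ with $f_x(v)>0$ the function $f_v$ stays bounded away from $0$.

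The substantive direction is necessity: starting from a metrizable $X$ with metric $\rho$, I must manufacture a single partition of unity $f\colon X\to l_1(V)$ whose stars form a basis. The natural strategy, foreshadowed by the ``joins of complexes'' language in the introduction, is to build $f$ from a sequence of locally finite open covers $\{\mathcal{U}_n\}_{n\ge 1}$ with $\mesh(\mathcal{U}_n)\to 0$, whose existence is guaranteed by paracompactness of metric spaces (Stone's theorem). For each $n$ I would choose a partition of unity $\{g^n_s\}_{s\in S_n}$ subordinated to $\mathcal{U}_n$, normalizing a family of functions in the sense of Definition \ref{NormalizationDef} (for instance the distance-based bump functions $g^n_s(x)=\max(0,\mathrm{mesh}(\mathcal U_n)-\rho(x,X\setminus U^n_s))$). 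I would then assemble these into one global partition of unity on the disjoint union index set $V=\bigsqcup_{n\ge 1} S_n$ by weighting the $n$th layer with $2^{-n}$ and renormalizing, exactly as the factor $2^{-n}$ was used to secure $l_1$-convergence in the proof of Version 2.

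The key steps, in order, are: (1) invoke Stone's theorem to get the $\sigma$-discrete (or locally finite) mesh-shrinking covers; (2) produce a $\mathcal{U}_n$-small partition of unity for each $n$ via normalization; (3) combine them with the weights $2^{-n}$ and renormalize to a single $f\colon X\to l_1(V)$, checking continuity by the same $l_1$-tail estimate used for Version 2; and (4) verify the basis condition, namely that $\{f^{-1}(st(v)):v\in V\}$ refines to an arbitrary $\rho$-ball. Step (4) is the heart of the matter: given $x$ and $\varepsilon>0$, I choose $n$ with $\mathrm{mesh}(\mathcal{U}_n)<\varepsilon$ and an index $s\in S_n$ with $g^n_s(x)>0$; then $f^{-1}(st(s))$ is contained in a member of $\mathcal{U}_n$, hence in $B(x,\varepsilon)$, so the stars are small enough to form a basis. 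I would also note that each $f^{-1}(st(v))$ is open (being the preimage of an open star under a continuous map), which completes the verification that they form a basis.

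The main obstacle I anticipate is the bookkeeping in step (3)/(4): after merging infinitely many partitions of unity and renormalizing, one must confirm both that the combined $g=\sum_v f_v$ remains continuous and bounded away from $0$ (so normalization is legitimate), and that renormalization does not destroy the crucial smallness of the individual carriers needed in step (4). The renormalization scales each $f_v$ by a positive continuous factor, which preserves the carrier $f_v^{-1}(0,1]$ set-theoretically and hence preserves the inclusion of $f^{-1}(st(v))$ in a member of $\mathcal{U}_n$; making this observation explicit is what keeps the basis argument intact. The $2^{-n}$ weighting is precisely the device that guarantees $g$ is finite and continuous while leaving carriers unchanged, so the delicate interplay is really between the analytic convergence estimate and the purely topological smallness condition, and I would keep those two concerns cleanly separated.
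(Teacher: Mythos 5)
The paper never actually proves Theorem \ref{DydMetrThm}; it imports it wholesale from \cite{DyPU} and uses it as a black box, so there is no in-paper argument to compare against and your proposal must be judged as a free-standing proof. Judged that way, it is essentially correct and follows the standard route (the same machinery the paper deploys in miniature elsewhere, e.g.\ in the Birkhoff corollary and in Section 4): sufficiency by pulling back the $l_1$-metric, necessity via Stone's theorem, subordinated partitions of unity for covers of mesh at most $2^{-n}$, and the $2^{-n}$-weighted merge into a single map to $l_1(V)$. Your key estimate for sufficiency --- if $f_x(v)=\epsilon>0$ then $\|f_x-f_y\|_{l_1}<\epsilon$ forces $f_y(v)>0$, hence $B_d(x,\epsilon)\subset f^{-1}(st(v))$ --- is exactly what makes the basis elements metrically open, and combined with continuity of $f$ (metric balls are preimages of open balls) it shows the two topologies coincide; your step (4) correctly reduces the basis condition to the carriers of the $n$-th layer lying in sets of diameter less than $\varepsilon$, and the $2^{-n}$ weighting handles $l_1$-convergence without disturbing carriers, as you note. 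One point to tighten: the claim that $d$ is a genuine metric (equivalently, that $f$ is injective) does not follow from the basis condition alone --- a basis need not separate points, and the statement as printed fails for a two-point indiscrete space with a constant partition of unity. What is true is that $f_x=f_y$ forces $x$ and $y$ to be topologically indistinguishable, so you need, and should state, a $T_0$ hypothesis (implicit in the paper, explicit in the original reference); with that made explicit your argument is complete in all essentials.
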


The proof of Version 2 of Urysohn's Metrization Theorem can be modified to give a different classification of metric spaces.

\begin{Corollary}
A space $X$ is metrizable if and only if there exists a set $V$ and an embedding of $X$ into $l_1(V)$, so that each element in the image has norm 1.
\end{Corollary}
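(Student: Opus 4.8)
The plan is to deduce this Corollary from Dydak's Metrization Theorem \ref{DydMetrThm}, which is the natural engine here, rather than reproving everything from scratch. The key observation is that the embedding condition in the Corollary and the basis condition in Theorem \ref{DydMetrThm} are essentially two phrasings of the same data: a partition of unity $f:X\to l_1(V)$ landing in $\Delta(V)$ (i.e. with every image point of norm $1$) whose star-preimages form a basis. So both directions will amount to translating between ``$f$ is an embedding'' and ``$\{f^{-1}(st(v)):v\in V\}$ is a basis'' for such an $f$.

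For the forward direction, I would start with a metrizable $X$ and apply Theorem \ref{DydMetrThm} to obtain a continuous partition of unity $f:X\to l_1(V)$ with $\{f^{-1}(st(v)):v\in V\}$ a basis. Since $f$ is a partition of unity, its image lies in $\Delta(V)$, so every image point already has norm $1$; it remains to check that $f$ is an embedding. Injectivity follows exactly as in the proof of Version 2: if $x\ne y$, some basis element $f^{-1}(st(v))$ separates them, so $f_x(v)\ne 0=f_y(v)$, hence $f_x\ne f_y$. That $f$ is a homeomorphism onto its image is precisely the statement that the preimages of the subbasic stars $st(v)$ form a basis for $X$, which is the hypothesis supplied by \ref{DydMetrThm}; I would note that the sets $st(v)=\{g\in l_1(V):g(v)\ne 0\}$ are open in $l_1(V)$, so their preimages are open, and conversely they generate the subspace topology finely enough to recover the basis.

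For the converse, suppose $X$ embeds in $l_1(V)$ with every image point of norm $1$, and let $f:X\to l_1(V)$ denote the embedding composed with inclusion. Because each $f_x$ has $\|f_x\|_{l_1}=1$ and (being in $l_1$ of nonnegative-looking coordinates) sums to $1$, the coordinate functions $f_v(x):=f_x(v)$ form a partition of unity, so I would first confirm $f_x(v)\ge 0$; if the hypothesis only gives norm $1$ without nonnegativity I would either restrict to $\Delta(V)$ or compose with the coordinatewise absolute value, observing that this preserves both continuity and the embedding property after renormalizing via Definition \ref{NormalizationDef}. Given that $f$ is an embedding, the open sets $f^{-1}(st(v))$ are the traces on $X$ of the open half-spaces $\{g(v)\ne 0\}$, and since the stars $st(v)$ generate the $l_1$-topology restricted to $\Delta(V)$, their preimages form a basis for $X$. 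Then Theorem \ref{DydMetrThm} yields metrizability.

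The main obstacle I anticipate is the nonnegativity/normalization bookkeeping in the converse: the clean statement ``norm $1$'' does not by itself force the coordinates to be nonnegative, yet the machinery of partitions of unity and stars is built for $\Delta(V)\subset l_1(V)$ with nonnegative coordinates. Resolving this cleanly—either by arguing that the relevant embeddings may be taken into $\Delta(V)$, or by a coordinatewise absolute-value-and-normalize step that preserves the embedding and the basis property—is the delicate point; everything else is a faithful repackaging of the two conditions in \ref{DydMetrThm} plus the elementary fact that the stars $st(v)$ are open and generate the subspace topology on the image.
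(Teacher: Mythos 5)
Your forward direction is essentially the paper's own argument: apply Theorem \ref{DydMetrThm}, note the image lies in $\Delta(V)$ so every point has norm $1$, get injectivity from the Hausdorff property via a separating basis element, and get the embedding property because the sets $st(v)$ are open in $l_1(V)$ and their preimages form a basis of $X$. That part is fine.

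The converse is where you go wrong. The paper does not argue this direction at all because it is immediate: $l_1(V)$ is a metric space, so any $X$ that embeds into it is homeomorphic to a subspace of a metric space and hence metrizable; the norm-one condition and nonnegativity play no role whatsoever here. Your proposed route instead tries to recover the hypothesis of Theorem \ref{DydMetrThm} from the embedding, and the key step you invoke --- that ``the stars $st(v)$ generate the $l_1$-topology restricted to $\Delta(V)$'' --- is false. Take $V=\{1,2\}$: then $\Delta(V)$ is the arc from $(1,0)$ to $(0,1)$, and the only subsets of it obtainable from $st(1)$ and $st(2)$ and their intersections are the arc with one or both endpoints removed; no small neighborhood of an interior point such as $(\tfrac12,\tfrac12)$ arises. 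Consequently, for an arbitrary embedding $f$ into the unit sphere of $l_1(V)$ the family $\{f^{-1}(st(v))\}_{v\in V}$ need not be a basis for $X$, so this leg of your argument fails. The nonnegativity and renormalization bookkeeping you worry about is likewise moot: once you replace this detour by the one-line observation that subspaces of metric spaces are metrizable, the delicate point disappears entirely.
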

\begin{proof}
Suppose $X$ is metrizable. By Dydak's Metrization Theorem \ref{DydMetrThm}, there exists a set V and a continuous partition of unity $f:X\to l_1(V)$ so that $\{f^{-1}(st(v)): v \in V\}$ forms a basis for $X$. This $f$ must be one-to-one by Hausdorff considerations. It is an embedding as the inverse image of a basis for $l_1(V)$ refines a basis for $X$.
\end{proof}

\section{Star refinements}
This section is devoted to creation of partitions of unity using star refinements. Our major application is the Birkhoff Metrization Theorem for Groups \ref{BirkhoffMetrThm}.
\begin{Definition}
A cover $\mathcal{V} $ of a set $X$ is a \textbf{star-refinement} of a cover $\mathcal{U} $ if for each $x\in X$ the \textbf{star} $st(x,\mathcal{V} )=\bigcup \{V\in \mathcal{V}  | x\in V\}$ is contained in an element of $\mathcal{U} $.
\end{Definition}

Notice that for a simplicial complex $K$ with vertex set $S$, the cover by open stars of vertices of $S$ has a natural star refinement given by starring of the vertices of the barycentric subdivision $K'$ of $K$. In \cite{DyPU}, the second author shows how to use barycentric subdivisions of simplicial complexes to find star refinements of covers with the concept of derivatives of partitions of unity.

\begin{Definition}
Let $X$ be a space, $S \neq \emptyset$, and $\{f_s\}_{s\in S}$ be a partition of unity on $X$. The $\textbf{derivative}$ of $\{f_s\}_{s\in S}$ is a partition of unity $\{f'_T\}_{T \subset S}$ indexed by all finite subsets of $S$ so that

1) $\sum_{s \in T}\frac{f'_T}{|T|} = f_s$ for each $s \in S$ 

2) $f'_T(x) \neq 0$ and $f'_F(x) \neq 0$ implies $T \subset F$ or $F \subset T$.
\end{Definition} 

The definition above is modeled after the special case for a continuous partition of unity $f:X \to |K|$ where the derivative is the induced map to the barycentric subdivision of $K$. See \cite{DyPU} for a constructive proof that derivatives of arbitrary partitions of unity always exist and are unique. The following proposition, proved in \cite{DyPU}, shows how to obtain star refinements from partitions of unity.

\begin{Proposition}[\cite{DyPU}]\label{ExistenceOfStarRef}
Suppose  $f: X \to l_1(S)$ is a partition of unity and $f':X \to l_1(\{T \subset S:|T|<\infty\})$ is its derivative. The open cover $\{(f')^{-1}(st(T)) | T \subset S:|T|<\infty\}$ of $X$ is a star refinement of the cover $\{f^{-1}(st(s)) | s\in S\}$  of $X$.
\end{Proposition}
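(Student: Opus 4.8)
The plan is to verify the star-refinement condition directly from the two defining properties of the derivative, reading property 1) in the natural way as $\sum_{T \ni s} \frac{f'_T}{|T|} = f_s$ for each $s \in S$ (the sum ranging over finite subsets $T$ containing $s$). Write $\mathcal{V} = \{(f')^{-1}(st(T)) : T \subset S,\ |T| < \infty\}$ and $\mathcal{U} = \{f^{-1}(st(s)) : s \in S\}$. Fix a point $x \in X$. The members of $\mathcal{V}$ that contain $x$ are precisely the sets $(f')^{-1}(st(T))$ for which $f'_T(x) \neq 0$, so set $\mathcal{T}_x = \{T : f'_T(x) \neq 0\}$. By property 2), $\mathcal{T}_x$ is totally ordered by inclusion, and since $f'(x)$ has $l_1$-norm $1$ it is nonempty. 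Because every $T$ is finite, the member of $\mathcal{T}_x$ of least cardinality is comparable to, hence contained in, every other member, so it is the minimum $T_0$ of the chain $\mathcal{T}_x$. Fix any $s_0 \in T_0$.

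I would then claim that $st(x,\mathcal{V}) \subset f^{-1}(st(s_0))$, which is exactly the star-refinement property since $f^{-1}(st(s_0)) \in \mathcal{U}$. Let $y \in st(x,\mathcal{V})$. By definition of the star, $y$ lies in some member of $\mathcal{V}$ that also contains $x$; that is, there is a finite $T \in \mathcal{T}_x$ with $f'_T(y) \neq 0$. Because $T_0$ is the minimum of $\mathcal{T}_x$, we have $s_0 \in T_0 \subset T$. Now invoke property 1): since all the summands $f'_F(y)$ are nonnegative,
\[ f_{s_0}(y) = \sum_{F \ni s_0} \frac{f'_F(y)}{|F|} \geq \frac{f'_T(y)}{|T|} > 0, \]
so $y \in f^{-1}(st(s_0))$. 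As $y$ was arbitrary, $st(x,\mathcal{V}) \subset f^{-1}(st(s_0))$, completing the verification.

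The only genuinely delicate point is the existence of the minimum $T_0$ of the chain $\mathcal{T}_x$: an arbitrary chain need not have a least element, so this is exactly where the finiteness of the indexing sets $T$ is essential, ruling out an infinite strictly descending chain and forcing a least-cardinality element. Once $T_0$ and $s_0$ are secured, the argument is a direct reading of the two axioms: property 2) makes $s_0$ common to every $T \in \mathcal{T}_x$, and property 1) turns a single nonzero derivative coordinate into strict positivity of $f_{s_0}(y)$. I do not expect any obstacle beyond pinning down this finiteness argument and the correct interpretation of the summation in property 1).
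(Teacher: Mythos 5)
Your proof is correct; note that the paper itself gives no proof of this Proposition (it is cited from \cite{DyPU}), and your argument is the standard one for that result: property 2) makes the carriers $\mathcal{T}_x=\{T: f'_T(x)\ne 0\}$ a chain of finite sets whose minimum $T_0$ exists by finiteness, and property 1), read as $f_s=\sum_{T\ni s}f'_T/|T|$, promotes $f'_T(y)\ne 0$ for any $T\supset T_0$ to $f_{s_0}(y)>0$ for $s_0\in T_0$ --- exactly the argument by which the open stars of a barycentric subdivision star-refine the open stars of the original complex. The only point you leave implicit is that $T_0\ne\emptyset$ (so that $s_0$ can be chosen); this follows because $1=\sum_{s\in S}f_s(x)=\sum_{T\ne\emptyset}f'_T(x)$ together with $\sum_T f'_T(x)=1$ forces $f'_\emptyset(x)=0$, so the empty set never belongs to $\mathcal{T}_x$.
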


The following Theorem exhibits that star refinements of covers can be used to construct partitions of unity.

\begin{Theorem}\label{StarTheorem}
Suppose $\mathcal{U} $ is an open cover of a topological space $X$. $\mathcal{U}$-small partitions of unity exists if and only if
there is a sequence of open covers $\mathcal{U}_n$ of $X$ such that $\mathcal{U}_1=\mathcal{U}$ and $\mathcal{U}_{n+1}$ is a star refinement of $\mathcal{U}_n$ for each $n$.
\end{Theorem}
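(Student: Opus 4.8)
The plan is to prove both implications, drawing the forward direction from the derivative machinery of Proposition \ref{ExistenceOfStarRef} and the reverse direction from a chaining/pseudometric construction followed by normalization.

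\emph{From a partition of unity to a tower of star refinements.} Suppose $f : X \to l_1(S)$ is a $\mathcal{U}$-small partition of unity and put $\mathcal{V}_0 = \{f^{-1}(st(s)) : s \in S\}$; by the definition of $\mathcal{U}$-smallness, $\mathcal{V}_0$ refines $\mathcal{U}$. Taking the derivative repeatedly produces partitions of unity $f = f^{(0)}, f^{(1)}, f^{(2)}, \dots$ whose associated star covers $\mathcal{V}_n = \{(f^{(n)})^{-1}(st(T))\}$ satisfy, by Proposition \ref{ExistenceOfStarRef}, that $\mathcal{V}_{n+1}$ is a star refinement of $\mathcal{V}_n$. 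First I would record the elementary observation that if $\mathcal{A}$ star-refines $\mathcal{B}$ and $\mathcal{B}$ refines $\mathcal{C}$, then $\mathcal{A}$ star-refines $\mathcal{C}$: each star $st(x,\mathcal{A})$ lies in some $B \in \mathcal{B}$, which lies in some $C \in \mathcal{C}$. Setting $\mathcal{U}_1 = \mathcal{U}$ and $\mathcal{U}_{n+1} = \mathcal{V}_n$ for $n \ge 1$ then gives the required tower: $\mathcal{U}_2 = \mathcal{V}_1$ star-refines $\mathcal{V}_0$, which refines $\mathcal{U}_1$, so $\mathcal{U}_2$ star-refines $\mathcal{U}_1$ by the observation, while $\mathcal{U}_{n+1} = \mathcal{V}_n$ star-refines $\mathcal{V}_{n-1} = \mathcal{U}_n$ directly for $n \ge 2$.

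\emph{From the tower to a partition of unity.} Given $(\mathcal{U}_n)$, the strategy is to manufacture a continuous pseudometric and then normalize an explicit family of bump functions. First I would run the Alexandroff--Urysohn chaining construction: set $\rho(x,y) = \inf\{2^{-n} : y \in st(x,\mathcal{U}_n)\}$ and then $d(x,y) = \inf \sum_i \rho(x_i,x_{i+1})$ over finite chains from $x$ to $y$. The star-refinement hypothesis (which forces the stars $st(x,\mathcal{U}_{n+1})$ to shrink into elements of $\mathcal{U}_n$) is exactly what lets Frink's chaining lemma apply, yielding $\tfrac12 \rho \le d \le \rho$ and hence inclusions of the shape $st(x,\mathcal{U}_{n+1}) \subseteq B_d(x,2^{-n}) \subseteq st(x,\mathcal{U}_{n-1})$. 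Since each star is an open neighborhood of $x$, the $d$-balls are open, so $d$ is a continuous pseudometric, and for $n$ large the cover $\{B_d(x,2^{-n}) : x \in X\}$ refines $\mathcal{U} = \mathcal{U}_1$. It then remains to produce a $\mathcal{U}$-small family of nonnegative continuous functions with continuous, finite, strictly positive sum; applying Definition \ref{NormalizationDef}, and noting that normalization does not enlarge carriers, converts such a family into the desired $\mathcal{U}$-small partition of unity.

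The hard part is exactly the construction of that summable family. Taking one bump $y \mapsto \max(0, r - d(x,y))$ for \emph{every} center $x$ produces carriers sitting inside elements of $\mathcal{U}$, but the resulting sum is supported on an entire ball $B_d(y,r)$ at each point and is therefore generally uncountable and not locally finite. I expect the crux to be extracting a locally finite, pointwise-summable subfamily — in essence, the paracompactness of pseudometric spaces. This can be secured by Stone's well-ordering argument applied to the ball cover or, more in keeping with the present viewpoint, by using the whole tower $(\mathcal{U}_n)$ to select centers scale by scale so that only finitely many bumps are nonzero near any point. Once local finiteness is in hand, continuity, finiteness, and positivity of $g = \sum f_s$ are routine, and normalization completes the argument.
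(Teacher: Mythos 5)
Your proposal is correct and follows essentially the same route as the paper: the forward direction iterates Proposition \ref{ExistenceOfStarRef} exactly as the paper does (you are more careful about composing a star refinement with a plain refinement to get $\mathcal{U}_2$ star-refining $\mathcal{U}_1=\mathcal{U}$), and the reverse direction is the same Alexandroff--Urysohn/Frink chaining pseudometric whose small balls refine $\mathcal{U}$, after which both you and the paper outsource the last step to the existence of subordinated partitions of unity on (pseudo)metric spaces --- the paper by passing to the metric quotient $X/{\sim}$ and pulling back along the projection, you by running Stone's well-ordering argument on $X$ directly. The one detail worth flagging is that the chaining estimate $\tfrac12\rho\le d$ as the paper proves it requires first thinning to every other cover so that $st(U,\mathcal{U}_{n+1})$ for $U\in\mathcal{U}_{n+1}$ (not merely the point stars $st(x,\mathcal{U}_{n+1})$) lies in an element of $\mathcal{U}_n$; this is absorbed into your citation of Frink's lemma and is harmless.
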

\begin{proof}
The converse follows by Proposition \ref{ExistenceOfStarRef} and iteration.

Suppose there is a sequence of open covers $\mathcal{U}_n$ of $X$ such that $\mathcal{U}_1=\mathcal{U}$ and $\mathcal{U}_{n+1}$ is a star refinement of $\mathcal{U}_n$ for each $n$. By picking only odd-numbered covers we may assume that for each $U\in \mathcal{U}_{n+1} $ the \textbf{star} $st(U,\mathcal{U}_{n+1} )=\bigcup \{V\in \mathcal{U}_{n+1}   | U\cap V\ne\emptyset\}$ is contained in an element of $\mathcal{U}_n $

Our strategy is to find a metric $d$ on a quotient space $X/{\sim}$ of $X$ such that the projection $p:X\to X/{\sim}$ is continuous and there is an open cover $\mathcal{V} $ of $X/{\sim}$ with the property that $p^{-1}(\mathcal{V} )$ refines $\mathcal{U} $. As there is a $\mathcal{V} $-small partition of unity $g:X/{\sim}\to l_1(S)$, $g\circ p$ is a $\mathcal{U} $-small partition of unity on $X$.
\par The relation $x\sim y$ is defined by the property that for each $n$ there is $U\in \mathcal{U}_n$ containing both $x$ and $y$. It is an equivalence relation. Our first approximation of a metric on $X/{\sim}$ is $\rho(x,y)$ defined
as infimum of $2^{-n}$ such that there is $U\in \mathcal{U}_n$ containing both $x$ and $y$. 
$d(x,y)$ is defined as the infimum of all sums $\sum\limits_{i=1}^k \rho(x_i,x_{i+1})$, where $x_1=x$ and $x_{k+1}=y$.

\textbf{Claim 1}: If $\sum\limits_{i=1}^k \rho(x_i,x_{i+1}) < 2^{-n}$ for some chain of points where $x_1=x$, $x_{k+1}=y$, and $n\ge 1$,
then $\rho(x,y)\leq 2^{-n}$.
\par \textbf{Proof of Claim 1}: We may assume that $k \ge 1$ is the smallest natural number such that there is a chain of size $k-1$ joining $x$ and $y$ for which the sum $\sum\limits_{i=1}^k \rho(x_i,x_{i+1})$ is smaller than $2^{-n}$. Notice the lengths $\rho(x_i,x_{i+1})$ and $\rho(x_{i+1},x_{i+2})$ of adjacent links must be different. Otherwise if there was some $m$ such that $\rho(x_i,x_{i+1}) = \rho(x_{i+1},x_{i+2}) = 2^{-m}$ then there would be elements of $\mathcal{U}_m$ containing the pairs $\{x_i,x_{i+1}\}$ and $\{x_{i+1},x_{i+2}\}$. $\mathcal{U}_{m}$ star refines $\mathcal{U}_{m-1}$ which means $\rho(x_i,x_{i+2}) \leq 2^{-m+1} = \rho(x_i,x_{i+1}) + \rho(x_{i+1},x_{i+2})$ and so we could drop  the middle term $x_{i+1}$ without increasing the sum.

Notice also that we cannot have the situation of three links such that sizes of the first and the third are equal and bigger than the size of the middle one. To see this, let $m\ge 1$ be the largest natural number so that the first pair and the last pair are contained in some element of $\mathcal{U}_m$.Then each of the three pairs of points are contained in some element of $\mathcal{U}_m$. Let $U$ be the element containing the middle pair. $\mathcal{U}_{m-1}$ star refines $\mathcal{U}_m$ and so $st(U,\mathcal{U}_{m})$ is contained in an element of $\mathcal{U}_{m-1}$ which means the distance between the first and last point is at most $2^{-m+1}$ and so is strictly less than the sum of the three distances.
\newline
\indent
Claim is clear for $k\leq 3$. We proceed by induction. Pick $m < k$ such that $\sum\limits_{i=1}^m \rho(x_i,x_{i+1}) < 2^{-n-1}$ but $\sum\limits_{i=1}^{m+1} \rho(x_i,x_{i+1}) \ge 2^{-n-1}$. Notice 
$\sum\limits_{i=m+1}^k \rho(x_i,x_{i+1}) < 2^{-n-1}$. By induction, $\rho(x,x_m)\leq 2^{-n-1}$ and $\rho(x_{m+1},y)\leq 2^{-n-1}$. Since there is $U\in \mathcal{U}_{n+1}$ containing $x_m$ and $x_{m+1}$, the star $st(U,\mathcal{U}_{n+1} )$ contains both $x$ and $y$.

\textbf{Claim 2}: $p:X\to X/{}\sim$ is continuous.
\par \textbf{Proof of Claim 2}: If $d(x,y) < r$ pick $n\ge 1$ such that $r-d(x,y) > 2^{-n}$.
Choose $U\in \mathcal{U}_n$ containing $y$ and notice $d(x,z) < r$ for any $z\in U$. That means $p^{-1}(B(x,r))$ is open in $X$ and $p$ is continuous.

\textbf{Claim 3}: $p^{-1}(B(x,1/2))\}_{x\in X}$ refines $\mathcal{U}$.
\par \textbf{Proof of Claim 3}: Choose $V\in\mathcal{U}_2$ containing $x\in X$. If $d(x,y) < 1/2$, then Claim 1
says $\rho(x,y)\leq 1/2$ and there is $W\in \mathcal{U}_2$ containing $x$ and $y$.
Therefore $B(x,1/2)\subset st(V,\mathcal{U}_2)$ which is contained in an element of $\mathcal{U}$.
\end{proof}

\begin{Corollary}
If $U$ is a non-empty open subset of a topological group $G$, then the cover $\mathcal{U}=\{g\cdot U\}_{g\in G}$
has a $\mathcal{U}$-small partition of unity.
\end{Corollary}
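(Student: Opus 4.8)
The plan is to deduce this from Theorem \ref{StarTheorem}: it suffices to produce a sequence of open covers $\mathcal{U}_n$ of $G$ with $\mathcal{U}_1=\mathcal{U}$ and $\mathcal{U}_{n+1}$ a star refinement of $\mathcal{U}_n$. I would build these out of left translates of symmetric neighborhoods of the identity $e$, exploiting the fact that in any topological group continuity of multiplication yields, for every neighborhood $N$ of $e$, a symmetric open neighborhood $W$ of $e$ (that is, $W=W^{-1}$) with $W\cdot W\subseteq N$; one takes $W=W_0\cap W_0^{-1}$ for a $W_0$ supplied by continuity of $m(x,y)=xy$ at $(e,e)$.

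The key computation is the shape of stars of covers by translates. For a symmetric open neighborhood $V$ of $e$ and the cover $\mathcal{V}=\{gV\}_{g\in G}$, a point $x$ lies in $gV$ exactly when $g^{-1}x\in V$, i.e. $g\in xV^{-1}=xV$, so $st(x,\mathcal{V})=\bigcup_{g\in xV}gV=xVV$. Consequently, if $W$ is symmetric with $WW\subseteq V$, then for every $x$ we get $st(x,\{gW\})=xWW\subseteq xV$, and $xV$ is a member of $\mathcal{V}$; hence $\{gW\}$ star-refines $\{gV\}$.

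First I would deal with the fact that $U$ need not be a neighborhood of $e$. Fixing a point $u_0\in U$, the set $N_0=u_0^{-1}U$ is an open neighborhood of $e$, and I would choose symmetric open neighborhoods $V_1\supseteq V_2\supseteq\cdots$ of $e$ with $V_1V_1\subseteq N_0$ and $V_{n+1}V_{n+1}\subseteq V_n$ for all $n\ge 1$. Setting $\mathcal{U}_1=\mathcal{U}$ and $\mathcal{U}_{n+1}=\{gV_n\}_{g\in G}$ for $n\ge 1$, the star computation above shows that $\mathcal{U}_2=\{gV_1\}$ star-refines $\mathcal{U}_1$, since $st(x,\mathcal{U}_2)=xV_1V_1\subseteq xN_0=(xu_0^{-1})U\in\mathcal{U}$, while for $n\ge 2$ the inclusion $V_nV_n\subseteq V_{n-1}$ gives $st(x,\mathcal{U}_{n+1})=xV_nV_n\subseteq xV_{n-1}\in\mathcal{U}_n$. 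Applying Theorem \ref{StarTheorem} to this sequence then produces the desired $\mathcal{U}$-small partition of unity.

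The only genuinely delicate point is the star computation together with the bookkeeping that makes the very first refinement work when $U$ is an arbitrary open set rather than a neighborhood of the identity; once the identity $st(x,\{gV\})=xVV$ is in hand, the rest is a routine recursion driven by continuity of multiplication. I expect no real obstacle beyond being careful that each $V_n$ is chosen symmetric, so that the expression for the star remains valid at every stage.
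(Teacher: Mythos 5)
Your proof is correct and takes essentially the same route as the paper: both reduce to Theorem \ref{StarTheorem} by covering $G$ with left translates of symmetric neighborhoods of the identity satisfying $V_{n+1}\cdot V_{n+1}\subseteq V_n$. The only cosmetic differences are that you compute point-stars directly via $st(x,\{gV\}_{g\in G})=xVV$ and obtain consecutive star refinements (and keep $U$ arbitrary by translating to $u_0^{-1}U$), whereas the paper assumes $1_G\in U$, proves $st(g\cdot U_{n+1},\mathcal{U}_n)\subset g\cdot U_{n-1}$, and then passes to every other cover.
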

\begin{proof}
We may assume $U$ contains $1_G\in G$. Pick a sequence $\{U_n\}_{n\ge 1}$ of symmetric open neighborhoods of $1_G\in G$ such that $U_{n+1}\cdot U_{n+1}\subset U_n\subset U$ for all $n\ge 1$.

Consider the coverings $\mathcal{U}_n=\{g\cdot U_n\}_{g\in G}$. $ \mathcal{U}_{n+1}$ is a star refinement of $ \mathcal{U}_{n-1}$. To see this, it suffices to show that $st(g \cdot U_{n+1},\mathcal{U}_n) \subset g \cdot U_{n-1}$ for each $g \in G$. Let $g$ and $h$ be in  $G$ with $g \cdot U_{n+1} \cap h \cdot U_{n+1} \neq \emptyset$. Then there are $u$ and $v$ in $U_{n+1}$ so that $g\cdot u = h\cdot v$. This implies $g^{-1}h \in U_{n+1}\cdot U_{n+1} \subset U_n$ and so $g^{-1}\cdot h\cdot U_{n+1} \subset g^{-1}\cdot h\cdot U_{n} \subset U_{n-1}$. It follows that $h\cdot U_{n+1} \subset g\cdot U_{n-1}$ which proves  $st(g \cdot U_{n+1},\mathcal{U}_n) \subset g \cdot U_{n-1}$.  So using every other cover we have, by \ref{StarTheorem}, a $ \mathcal{U}$-small partition of unity $f$ on $G$. 

\end{proof}

\begin{Corollary}[Birkhoff Metrization Theorem for Groups \cite{Nag}: page 225] \label{BirkhoffMetrThm}
A topological group $G$ is metrizable if and only if it is first countable.
\end{Corollary}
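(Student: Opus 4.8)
The plan is to dispatch the easy direction in one line and then reduce the substantive converse to Dydak's Metrization Theorem \ref{DydMetrThm}. If $G$ is metrizable then it is first countable, since the balls $B(1_G,1/n)$ already form a countable neighborhood basis at the identity; no group structure is needed here. All the content therefore lies in the converse, and I assume throughout (as is standard for this theorem) that $G$ is Hausdorff.

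Assume $G$ is first countable. First I would tidy up a countable neighborhood basis at $1_G$: exactly as in the Corollary above, choose symmetric open neighborhoods $U_n$ of $1_G$ that form a neighborhood basis at $1_G$ and satisfy $U_{n+1}\cdot U_{n+1}\subset U_n$ for every $n$, using continuity of multiplication and inversion. For each $n$ form the cover $\mathcal{V}_n=\{g\cdot U_n\}_{g\in G}$. By the preceding Corollary each $\mathcal{V}_n$ admits a $\mathcal{V}_n$-small partition of unity $f_n:G\to l_1(S_n)$. These give control at every ``scale'' $U_n$ simultaneously, and the job is to combine them into a single partition of unity whose stars form a basis.

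Next I would glue the $f_n$ together. Put $V=\bigsqcup_n(\{n\}\times S_n)$ and define $f:G\to l_1(V)$ by $f(x)(n,s)=2^{-n}f_n(x)(s)$. Then $\|f(x)\|_{l_1}=\sum_n 2^{-n}=1$, so $f$ lands in $\Delta(V)$, and continuity follows from the uniform smallness of the tail $\sum_{n>M}2^{-n}$ together with continuity of each $f_n$, precisely as in the proof of Urysohn's Metrization Theorem Version \ref{Version 2}. The decisive bookkeeping fact is that $f^{-1}(st(n,s))=f_n^{-1}(st(s))$, so the stars of the combined map $f$ are exactly the stars of all the $f_n$ taken together.

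The crux is verifying that $\{f^{-1}(st(n,s))\}$ is a basis for $G$, after which Dydak's Metrization Theorem \ref{DydMetrThm} finishes the proof. I would argue as follows. Since $f_n$ is a partition of unity, every $x\in G$ lies in some star $f_n^{-1}(st(s))$ at each scale $n$; since $f_n$ is $\mathcal{V}_n$-small, that star, together with $x$, sits inside a single translate $g\cdot U_n$, and symmetry of $U_n$ forces $g\in xU_n$, so the star is contained in $xU_nU_n\subset xU_{n-1}$. Given any open $W\ni x$, translation invariance of the topology gives $x^{-1}W\supset U_m$ for some $m$, whence $xU_nU_n\subset xU_{n-1}\subset W$ as soon as $n-1\ge m$. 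Thus $x$ has arbitrarily small stars inside $W$, which is exactly the basis condition. I expect this final containment estimate to be the main obstacle: it is the one place where the group structure is indispensable, through both translation invariance and the relation $U_{n+1}\cdot U_{n+1}\subset U_n$, and where one must check that the carrier containment supplied by $\mathcal{V}_n$-smallness genuinely yields a shrinking neighborhood basis at each point.
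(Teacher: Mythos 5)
Your proposal is correct and follows essentially the same route as the paper: apply the preceding Corollary to the covers $\{g\cdot U_n\}_{g\in G}$ at each scale, combine the resulting partitions of unity with weights $2^{-n}$, and invoke Dydak's Metrization Theorem \ref{DydMetrThm}. The only difference is that you explicitly verify that the carriers form a basis (via symmetry and $U_{n+1}\cdot U_{n+1}\subset U_n$), a detail the paper's proof leaves implicit.
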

\begin{proof}
Every metrizable space is first countable, so assume $G$ is first countable and pick a basis $\{U_n\}_{n\ge 1}$ of  open neighborhoods of $1_G\in G$ .
Let $\mathcal{U}_n=\{g\cdot U_n\}_{g\in G}$. There are $ \mathcal{U}_n$-small partitions of unity $\{f_{n,g}\}_{g \in G}$ on $G$. This leads to a partition of unity $\{\frac{f_{n,g}}{2^n}:g \in G$ and  $n \in \mathbb{N}\}$ on $G$ whose carriers form a basis of $G$. Dydak's Metrization Theorem \ref{DydMetrThm} says $G$ is metrizable if it is Hausdorff.
\end{proof}

\section{Sigma Refinements}

In this section we show how to construct partitions of unity using $\sigma$-discrete refinements of open covers.

For a normal space $X$ and a finite open cover $\mathcal{U}$, a $\mathcal{U}$-small partition of unity can be obtained as follows: Choose a closed subset $B_U$ of each $U \in \mathcal{U}$ and use Urysohn's Lemma to get a map $f_U : X\to [0,1]$ where $f_U(B_U) \subset \{1\}$ and $f_U(X \setminus U) \subset \{0\}$. $\sum\limits_{U \in \mathcal{U}}f_U$ is a partition of a continuous function $g: X \to (0,\infty)$. Notice that $\{\frac{f_U}{g}\}_{U \in \mathcal{U}}$ is a $\mathcal{U}$-small partition of unity on $X$.

Suppose now that $\mathcal{U}$ is a countable open cover of $X$. $\mathcal{U}$ may be written as a $\bigcup\limits_{n \ge1} \mathcal{U}_n $ where $\mathcal{U}_n$ is a finite subset of $\mathcal{U}$ for $n \ge 1$. For each $n \ge 1$ we can find a collection of functions $\{f_U\}_{U \in \mathcal{U}_n}$ using Urysohn's Lemma as done above. Notice that $\{\frac{f_U}{2^n}: U \in \mathcal{U}_n$ and $n \ge1 \}$ is a partition of a positive and finite function $h:X\to (0,\infty)$. Normalizing this family yields a $\mathcal{U}$-small partition of unity on $X$.

The previous two methods can be summarized as follows: Given a cover $\mathcal{U}$ of a space $X$, find a countably many partitions $\{f_s\}_{s\in S_n}$ of positive and finite functions on $X$ so that $\{f_s^{-1}(0,1]: s \in S_n\text{ and }n \ge1\}$ refines $\mathcal{U}$. These collections induce the partition $\{\frac{f_s}{2^n}:$$s \in S_n$ and $n\ge 1 \}$ of a positive and finite function. Normalizing this collection yields the desired $\mathcal{U}$-small partition of unity.

Recall that a space $X$ is $\textbf{collectionwise normal}$ if every discrete collection of closed subsets $\{A_s\}_{s\in S}$ has a discrete open coarsening $\{U_s\}_{s\in S}$ where $A_s \subset U_s$ for all $s\in S$.

\begin{Proposition}\label{SigmaDiscreteOpenRefinementProp}
A $\sigma$-discrete open cover $\mathcal{U} = \{U_s\}_{s\in S}$ of a normal space $X$ admits a $\mathcal{U}$-small partition of unity if and only if it has a $\sigma$-discrete closed refinement. 
\end{Proposition}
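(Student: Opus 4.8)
The plan is to prove the two implications separately; normality (via Urysohn's Lemma) is needed only to build the partition of unity, while the hypothesis that $\mathcal{U}$ is $\sigma$-discrete is what will guarantee continuity of the sums we form.

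For the implication that a $\sigma$-discrete closed refinement yields a $\mathcal{U}$-small partition of unity, write $S=\bigcup_{k}S_k$ with each $\{U_s\}_{s\in S_k}$ discrete, and let $\{A_t\}_{t\in T}$ be a $\sigma$-discrete closed refinement with $T=\bigcup_n T_n$ and each $\{A_t\}_{t\in T_n}$ discrete. For each $t$ fix $s(t)$ with $A_t\subseteq U_{s(t)}$, and for $s\in S$ and $n\ge1$ put $B^n_s=\bigcup\{A_t: t\in T_n,\ s(t)=s\}$; since $\{A_t\}_{t\in T_n}$ is discrete this union is closed and $B^n_s\subseteq U_s$. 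By normality choose, via Urysohn's Lemma, a continuous $g^n_s:X\to[0,1]$ equal to $1$ on $B^n_s$ and to $0$ off $U_s$, so that $(g^n_s)^{-1}(0,1]\subseteq U_s\in\mathcal{U}$. The crucial observation is that for fixed $n$ and fixed $k$ the open sets $\{U_s\}_{s\in S_k}$ are discrete, hence pairwise disjoint, so every point has a neighborhood on which at most one $g^n_s$ with $s\in S_k$ is nonzero; therefore $F^{n,k}:=\sum_{s\in S_k}g^n_s$ is continuous and $\le1$.

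Now set $g:=\sum_{n,k\ge1}2^{-(n+k)}F^{n,k}$. This series is uniformly dominated by $\sum_{n,k}2^{-(n+k)}=1$, so $g$ is continuous with $g\le1$, and it is positive because any $x$ lies in some $A_t$ with $t\in T_n$ and $s(t)=s\in S_k$, whence $g^n_s(x)=1$ and $g(x)\ge2^{-(n+k)}>0$. Thus $\{2^{-(n+k)}g^n_s: s\in S_k,\ n,k\ge1\}$ is a partition of the positive finite continuous function $g$ whose carriers lie in members of $\mathcal{U}$, and its normalization (Definition \ref{NormalizationDef}) is the desired $\mathcal{U}$-small partition of unity, exactly in the style summarized before the proposition. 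Both discreteness hypotheses are used here: that of $\{A_t\}_{t\in T_n}$ to make $B^n_s$ closed (so a single Urysohn function suffices), and that of $\{U_s\}_{s\in S_k}$ to make the partial sums continuous.

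For the converse, suppose $f=\{f_v\}_{v\in V}:X\to l_1(V)$ is a $\mathcal{U}$-small partition of unity, so each $f^{-1}(st(v))=f_v^{-1}(0,1]$ lies in a member of $\mathcal{U}$. The image $f(X)$ is a subspace of the metric space $l_1(V)$, and $\{st(v)\}_{v\in V}$ restricts to an open cover of it; since every open cover of a metric space has a $\sigma$-discrete closed refinement, choose one $\{C_j\}_{j\in J}$, $J=\bigcup_m J_m$, with each $\{C_j\}_{j\in J_m}$ discrete and each $C_j$ inside some $st(v)$. Setting $D_j:=f^{-1}(C_j)$ gives closed sets covering $X$ with $D_j\subseteq f^{-1}(st(v))\subseteq U$ for some $U\in\mathcal{U}$, so $\{D_j\}$ refines $\mathcal{U}$; and because the continuous preimage of a discrete family is discrete (a neighborhood of $f(x)$ meeting at most one $C_j$ pulls back to a neighborhood of $x$ meeting at most one $D_j$), each $\{D_j\}_{j\in J_m}$ is discrete, so $\{D_j\}$ is the required $\sigma$-discrete closed refinement. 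I expect the main obstacle to be this existence step; to keep the argument self-contained one can instead work directly on $X$ by well-ordering $V$ and, for $v\in V$ and $n\ge1$, analyzing sets of the form $\{x: f_v(x)\ge 1/n \text{ and } f_w(x)<1/n \text{ for } w<v\}$, closing them up and checking, via the $l_1$-continuity of $f$, that for each fixed $n$ the resulting family is discrete while the whole collection covers and refines $\mathcal{U}$ — this is the Stone-type construction and is the only genuinely technical point, the forward direction being a routine normalization once both discreteness hypotheses are exploited.
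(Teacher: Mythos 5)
Your proof is correct and follows essentially the same route as the paper: the forward direction is the paper's Urysohn-plus-geometric-weights normalization (you merely group the closed refinement by the open set it lands in, where the paper intersects it with the discrete subfamilies $\mathcal{U}_n$), and the converse, like the paper's Proposition \ref{ExistenceSigmaDiscreteRefinementProp}, passes through the metric space $l_1(V)$ by pulling back a $\sigma$-discrete closed refinement of the star cover. The only difference of substance is that you invoke the classical fact that open covers of metric spaces admit $\sigma$-discrete closed refinements as a black box, whereas the paper derives it from Theorem \ref{StarTheorem} together with Lemmas \ref{CNDiscClosedLemma} and \ref{CNDiscToRefinementLemma}; note also that your ``self-contained'' Stone-type sketch would need a gap (e.g.\ $f_w(x)\le \frac{1}{n+1}$ rather than $f_w(x)<\frac{1}{n}$) for the fixed-$n$ families to remain discrete after taking closures.
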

\begin{proof} $(\Leftarrow)$ 
$\mathcal{U} = \bigcup\limits_{n \ge 1} \mathcal{U}_n$ where  $\mathcal{U}_n = \{U_{n,s} \subset U_s:s\in S\}$ is a discrete open collection. By intersecting $\mathcal{U}_n$ with families in the $\sigma-$discrete closed refinement and taking closures, we may assume that $\mathcal{U}_n$ has a $\sigma-$discrete closed refinement for each $n\ge 1$. Label the $\sigma-$discrete closed refinement of $\mathcal{U}_n$ by $\mathcal{D}_n = \bigcup\limits_{m\ge 1}\{D_{s,m} \subset U_{s,n}:s\in S\}$. For each $\mathcal{U}_n$ we can obtain a partition of a positive and finite function whose cozero sets refines it as follows. For each $s\in S$ and $m \ge 1$ we can use Urysohn's Lemma to get a function $f_{s,m}:X \to [0,1]$ where $D_{s,m}$ is mapped to $1$ and $X \setminus U_{n,s}$ is mapped to $0$. For each $m$, we therefore have a collection of functions $\{f_{m,s}:s\in S\}$. Consider the family $\{ g_{n,s} = \sum\limits_{m\ge 1}\frac{f_{m,s}}{2^m}:s \in S\}$. This is a partition of a positive and finite function whose cozero sets are precisely $\mathcal{U}_n$. 

Consider the family $\{h_s = \sum\limits_{n\ge 1} \frac{g_{n,s}}{2^n}:s\in S\}$. This is a $\mathcal{U}-$small partition of a positive and finite function. Normalizing this family yields a $\mathcal{U}-$small partition of unity. 

$(\Rightarrow)$ See Proposition \ref{ExistenceSigmaDiscreteRefinementProp} for the converse.
\end{proof}

\begin{Corollary}\label{SigmaDiscreteRefinementProp}
Assume $X$ is collectionwise normal. An open cover $\mathcal{U} = \{U_s\}_{s\in S}$ of $X$ admits a $\mathcal{U}$-small partition of unity if and only if it has a $\sigma$-discrete closed refinement. 
\end{Corollary}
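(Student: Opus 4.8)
The plan is to reduce the Corollary to Proposition \ref{SigmaDiscreteOpenRefinementProp} by exploiting collectionwise normality to replace the arbitrary open cover $\mathcal{U}$ with a $\sigma$-discrete open refinement that still carries a $\sigma$-discrete closed refinement. I would treat the two implications separately, first recording the trivial remark that collectionwise normal spaces are normal, so that the Proposition is available throughout.

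For the substantive direction $(\Leftarrow)$, suppose $\mathcal{U}$ has a $\sigma$-discrete closed refinement $\mathcal{A} = \bigcup_{n\ge 1}\mathcal{A}_n$, where each $\mathcal{A}_n$ is a discrete family of closed sets, and for each $A\in\mathcal{A}$ fix a member $U(A)\in\mathcal{U}$ with $A\subset U(A)$. First I would apply collectionwise normality to each discrete closed family $\mathcal{A}_n$ to obtain a discrete open coarsening $\{W_A : A\in\mathcal{A}_n\}$ with $A\subset W_A$; replacing $W_A$ by $W_A\cap U(A)$ preserves discreteness (shrinking the members of a discrete family keeps it discrete) and arranges $W_A\subset U(A)$. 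Setting $\mathcal{V}_n=\{W_A : A\in\mathcal{A}_n\}$ and $\mathcal{V}=\bigcup_{n\ge 1}\mathcal{V}_n$, I obtain a $\sigma$-discrete open cover (it covers $X$ because $\mathcal{A}$ does and $A\subset W_A$) that refines $\mathcal{U}$.

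The key observation is that the original family $\mathcal{A}$ is itself a $\sigma$-discrete closed refinement of the new cover $\mathcal{V}$, since each $A\in\mathcal{A}_n$ is closed and contained in $W_A\in\mathcal{V}$. Thus $\mathcal{V}$ is a $\sigma$-discrete open cover of the normal space $X$ possessing a $\sigma$-discrete closed refinement, so Proposition \ref{SigmaDiscreteOpenRefinementProp} supplies a $\mathcal{V}$-small partition of unity $f$. Because every member of $\mathcal{V}$ lies inside a member of $\mathcal{U}$, each carrier of $f$ lies inside an element of $\mathcal{U}$, so $f$ is the desired $\mathcal{U}$-small partition of unity. For the direction $(\Rightarrow)$ no collectionwise normality is needed: the passage from a $\mathcal{U}$-small partition of unity to a $\sigma$-discrete closed refinement is the general converse recorded in Proposition \ref{ExistenceSigmaDiscreteRefinementProp}, which I would simply cite.

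The main obstacle I anticipate is the bookkeeping in the expansion step: ensuring that the discrete open coarsening produced by collectionwise normality can simultaneously be trimmed inside the chosen $U(A)$ without destroying discreteness, and confirming that $\mathcal{A}$ persists as a closed refinement of the coarser cover $\mathcal{V}$. Everything else is a direct appeal to the Proposition together with the elementary remark that, when $\mathcal{V}$ refines $\mathcal{U}$, every $\mathcal{V}$-small partition of unity is automatically $\mathcal{U}$-small.
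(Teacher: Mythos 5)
Your proof is correct and rests on the same key idea as the paper's: use collectionwise normality to expand each discrete closed family of the refinement into a discrete open family, trimmed to lie inside members of $\mathcal{U}$, and handle the converse by citing Proposition \ref{ExistenceSigmaDiscreteRefinementProp}. The only difference is in the final step: the paper runs the Urysohn-function construction directly on the sandwich $F_s\subset U_s$ and normalizes the weighted sum $\bigcup_n\{f_s/2^n\}$, whereas you package that step by observing that your $\sigma$-discrete open cover $\mathcal{V}$ has $\mathcal{A}$ as a $\sigma$-discrete closed refinement and invoking Proposition \ref{SigmaDiscreteOpenRefinementProp}; this is a legitimate and slightly cleaner reduction, at the cost of relying on the (somewhat awkwardly indexed) proof of that Proposition rather than being self-contained.
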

\begin{proof}
$(\Leftarrow)$ Choose a $\sigma$-discrete closed refinement $\mathcal{V} = \bigcup\limits_{n=1}^{\infty}\mathcal{V}_n$ of $\mathcal{U}$ where each $\mathcal{V}_n = \{F_{s}\}_{s \in S_n}$ is a discrete collection of closed subsets of $X$. Let $\{U_{s}\}_{s\in S_n}$ be a discrete open collection with $F_{s} \subset U_{s}$ and $U_{s}$ lies in some element of $\mathcal{U}$ for each $s \in S_n$. Using Urysohn's Lemma for each $s \in S_n$ there is a function $f_{s}:X \to [0,1]$ where $f_{s} (X \setminus U_{s}) \subset \{0\}$ and $f_{s}(F_s) \subset \{1\}$. The collection of functions $\bigcup\limits_{n=1}^{\infty}\{\frac{f_{s}}{2^n}:s \in S_n\}$ form a partition of a positive and finite funtion $g:X \to (0,\infty)$. Normalizing the family yields a partition of unity on $X$ that is $\mathcal{U}$-small.

$(\Rightarrow)$ See \ref{ExistenceSigmaDiscreteRefinementProp} for the converse.
\end{proof}

\begin{Proposition} \label{Collnorm-sigmarefinementsTheorem}
Assume $X$ is collectionwise normal. An open cover $\mathcal{U}$ of $X$ admits a $\mathcal{U}$-small partition of unity if and only if there is an open refinement of $\mathcal{U}$ that is point-finite. 
\end{Proposition}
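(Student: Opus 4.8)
My plan is to route both implications through the $\sigma$-discrete refinement machinery already developed, using Corollary \ref{SigmaDiscreteRefinementProp} and Proposition \ref{ExistenceSigmaDiscreteRefinementProp} as black boxes, so that the only genuinely new work is the comparison between point-finite open refinements and $\sigma$-discrete closed refinements in the presence of collectionwise normality.

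For the forward implication, suppose $\mathcal{U}$ admits a $\mathcal{U}$-small partition of unity. By Proposition \ref{ExistenceSigmaDiscreteRefinementProp} the cover $\mathcal{U}$ then has a $\sigma$-discrete closed refinement $\bigcup_n \mathcal{D}_n$. First I would use collectionwise normality to thicken each discrete closed family $\mathcal{D}_n$ to a discrete open family whose members still lie inside elements of $\mathcal{U}$ (intersecting the open expansions with the corresponding elements of $\mathcal{U}$); this produces a $\sigma$-discrete, hence $\sigma$-locally finite, open refinement of $\mathcal{U}$. Since a collectionwise normal space is regular, I would then invoke the standard lemma of Michael that in a regular space any open cover possessing a $\sigma$-locally finite open refinement possesses a locally finite open refinement. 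A locally finite cover is in particular point-finite, so this direction is complete.

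For the converse, suppose $\mathcal{V} = \{V_\alpha\}_{\alpha \in A}$ is a point-finite open refinement of $\mathcal{U}$. My goal is to manufacture a $\sigma$-discrete closed refinement of $\mathcal{U}$ and then quote Corollary \ref{SigmaDiscreteRefinementProp}. The plan is to stratify $X$ by multiplicity: for a finite $F \subseteq A$ put $H_F = \bigcap_{\alpha \in F} V_\alpha$ and $E_F = \{x : x \in V_\alpha \iff \alpha \in F\}$, and set $Y_n = \{x : x \text{ lies in at least } n \text{ members of } \mathcal{V}\}$. Point-finiteness makes each $Y_n$ open, the $E_F$ partition $X$, and a short argument shows that for each fixed $n$ the family $\{E_F : |F| = n\}$ is discrete in the open subspace $Y_n$. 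After shrinking $\mathcal{V}$ to $\{W_\alpha\}$ with $\overline{W_\alpha} \subseteq V_\alpha$ (the shrinking lemma for point-finite covers of a normal space), I would use collectionwise normality to separate the strata into genuine discrete families of closed subsets of $X$, each member sitting inside some $V_\alpha$ and hence inside some element of $\mathcal{U}$; taking the union over $n$ yields the desired $\sigma$-discrete closed refinement.

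The main obstacle is precisely the last step of the converse. The family $\{E_F : |F| = n\}$ is only discrete in the open set $Y_n$, not in all of $X$: a point of lower multiplicity can lie in the closure of $E_F$ for many different $F$, so the naive closures $\overline{E_F}$ need not form a discrete family in $X$. Overcoming this is the heart of the classical fact that a collectionwise normal metacompact space is paracompact; the cure is to pull each stratum inward using the shrinking $\{W_\alpha\}$ before applying collectionwise normality, so that the resulting closed sets stay away from the troublesome boundary points while still covering the multiplicity-$n$ layer. Everything else—the expansions, the shrinking lemma, and the passage from a $\sigma$-discrete closed refinement back to a partition of unity via Corollary \ref{SigmaDiscreteRefinementProp}—is routine given the tools already in the paper.
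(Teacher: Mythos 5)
Your high-level plan (route everything through $\sigma$-discrete closed refinements and quote Corollary \ref{SigmaDiscreteRefinementProp} and Proposition \ref{ExistenceSigmaDiscreteRefinementProp}) is the same as the paper's, and you correctly locate the crux of the converse; but both directions have genuine gaps. In the forward direction, the ``single-cover Michael lemma'' you invoke is false as stated: Michael's theorem characterizes paracompactness, and its proof applies the refinement hypothesis to auxiliary covers, not just to the given one. Concretely, a countable open cover is trivially a $\sigma$-discrete open refinement of itself, yet in a Dowker space (Rudin's example is collectionwise normal, hence regular) there is a countable open cover with no point-finite open refinement, since normal plus countably metacompact implies countably paracompact. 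Your thickened refinement consists of arbitrary open sets, so nothing rules out this situation. The repair is to remember that the refinement produced from the partition of unity consists of \emph{cozero} sets: the countable cover by the unions $W_n$ of the discrete cozero families is then a countable cozero cover, which always admits a locally finite cozero shrinking; alternatively one can work directly with the partition of unity, e.g.\ via the point-finite open sets $\{x: f_s(x)>\tfrac12\sup_t f_t(x)\}$.

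In the converse (the direction the paper actually proves), you correctly observe that $\{E_F:|F|=n\}$ is discrete only in $Y_n$, but the cure you propose does not work. Collectionwise normality cannot ``separate the strata into genuine discrete families'' --- it only expands a family that is \emph{already} discrete --- and shrinking $\mathcal{V}$ to $\{W_\alpha\}$ with $\overline{W_\alpha}\subset V_\alpha$ does not stop a point $x$ of multiplicity $m<n$ from being an accumulation point of $E_F\cap\bigcap_{\alpha\in F}\overline{W_\alpha}$ for infinitely many distinct $F$ of size $n$: the family $\{\overline{W_\beta}\}$ is point-finite but not locally finite, so you cannot excise the offending sets near $x$. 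The missing idea, which is the paper's construction, is to process multiplicities in increasing order and to build the buffers \emph{inductively}: at stage $k$ one first expands the already-constructed discrete closed families of levels $<k$ to discrete open families $int(D^\ast_F)$ using collectionwise normality, and then defines the level-$k$ sets with all those open expansions deleted. It is precisely these deleted open neighborhoods of the lower strata that make the level-$k$ family discrete at points of lower multiplicity, while point-finiteness guarantees that the union over all $k$ still covers $X$.
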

\begin{proof}
Choose a point-finite open refinement $\mathcal{V}=\{V_s\}_{s\in S}$ of $\mathcal{U}$. Let $S(n)$ be the family of subsets of $S$
consisting of eactly $n$ elements.

For each $n\ge 1$ we will construct
two discrete closed shrinkings $\mathcal{D}_n=\{D_{T}\}_{T\in S(n)}$ and $\mathcal{D}^\ast_n=\{D^\ast_{T}\}_{T\in S(n)}$ of $\mathcal{V}$ such that $\bigcup \mathcal{D}^\ast_n$
is a cover of $X$.

$D_{s}$ is the set of points that belong to $V_s$ only: $D_{s}:=V_s\setminus \bigcup\limits_{t\ne s}V_t$.
It is a closed subset of $V_s$ and the only element of $\mathcal{V}$ that may intersect it is $V_s$, so $\{D_{s}\}_{s\in S}$ is a discrete closed family. We extend $\{D_{s}\}_{s\in S}$ to a discrete family $\{D^\ast_{s}\}_{s\in S}$
such that $D_s\subset int(D^\ast_s)\subset D_s^\ast \subset V_s$ for all $s\in S$.

Suppose $\mathcal{D}_n$ and  $\mathcal{D}^\ast_n$ are known for all $n < k$.
Given $T\in S(k)$ define $D_{T,k}$ as the set of points belonging to $\bigcup\limits_{t\in T} V_t$
and not belonging to $\bigcup\limits_{t\notin T}V_t\cup \bigcup\limits_{n < k}\bigcup\limits_{F\in S(n)}int(D^\ast_F)$. Let $\mathcal{D}_k$ be the collection of $D_{T,k}$ where $T$ ranges over $S(k)$.

It is a discrete closed collection that we can enlarge to $D^\ast_k$. Notice that all the points contained in at most $n$ elements of the cover are contained in $\bigcup_{k \leq n}D^\ast_k$. Therefore $\bigcup \mathcal{D}^\ast_n$ is indeed a cover of $X$ as $\mathcal{V}$ is point finite. We are done by Proposition \ref{SigmaDiscreteRefinementProp}.
\end{proof}

Recall the notion of weak paracompactness.
\begin{Definition}
A topological space $X$ is \textbf{weakly paracompact} if for every open cover $\mathcal{U} = \{U_s:s\in S\}$ of $X$ there exists a point finite partition of unity $f:X\to l_1(S)$ on $X$. This means that for each $x \in X$ the support of $f(x)$ is finite, i.e. the cardinality of $\{s\in S:f(x)(s) \neq 0\}$ is finite.
\end{Definition}

\begin{Corollary} [Michael-Nagami Theorem \cite{En}]
Assume $X$ is collectionwise normal. $X$ is paracompact if and only if it is weakly paracompact.
\end{Corollary}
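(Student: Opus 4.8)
The plan is to derive this as an essentially immediate consequence of Proposition \ref{Collnorm-sigmarefinementsTheorem}, treating the two implications separately. Throughout I would use the standard characterization that a (Hausdorff) space is paracompact precisely when every open cover $\mathcal{U}$ admits a $\mathcal{U}$-small partition of unity; in the formalism of this paper this is the natural rendering of paracompactness, and it is exactly what links the combinatorial refinement statements to the definition of weak paracompactness. The slogan is that all of the genuine topological content has already been packaged into Proposition \ref{Collnorm-sigmarefinementsTheorem}, so the theorem should fall out as a corollary once the hypotheses are matched to the right formalism.

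For the forward implication I would assume $X$ is paracompact and take an arbitrary open cover $\mathcal{U}=\{U_s\}_{s\in S}$. Paracompactness supplies a locally finite partition of unity subordinate to $\mathcal{U}$, which may be reindexed by $S$ so that each cozero set $f_s^{-1}(0,1]$ lies in $U_s$. The key observation is that a locally finite partition of unity is in particular point-finite: each $x$ has a neighborhood meeting only finitely many cozero sets, so $\{s : f(x)(s)\neq 0\}$ is finite. Thus the very same $f$ witnesses weak paracompactness. This direction uses no collectionwise normality at all.

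For the converse I would assume $X$ is collectionwise normal and weakly paracompact, and fix an arbitrary open cover $\mathcal{U}=\{U_s\}_{s\in S}$. Weak paracompactness produces a point-finite partition of unity $f:X\to l_1(S)$ subordinate to $\mathcal{U}$. Its cozero sets $\{f_s^{-1}(0,1]\}_{s\in S}$ are open, cover $X$ (since the sum of the $f_s$ is identically $1$), refine $\mathcal{U}$, and are point-finite exactly because $f$ is point-finite; hence $\mathcal{U}$ has a point-finite open refinement. I would then invoke Proposition \ref{Collnorm-sigmarefinementsTheorem}: collectionwise normality together with the existence of a point-finite open refinement yields a $\mathcal{U}$-small partition of unity. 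Since $\mathcal{U}$ was arbitrary, every open cover of $X$ has a subordinate partition of unity, so $X$ is paracompact.

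The hard part is not any calculation in the assembly above but rather making the two identifications explicit and precise. The load-bearing points are: first, that the paper's definition of weak paracompactness is nothing more than the existence, for each cover, of a point-finite subordinate partition of unity, whose cozero sets are automatically the point-finite open refinement demanded by Proposition \ref{Collnorm-sigmarefinementsTheorem}; and second, that the existence of $\mathcal{U}$-small partitions of unity for all covers is precisely the partition-of-unity formulation of paracompactness. Once these identifications are in place there is no remaining combinatorial obstacle, and the Michael--Nagami Theorem is a direct corollary of Proposition \ref{Collnorm-sigmarefinementsTheorem}.
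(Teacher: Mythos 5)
Your proof is correct and follows exactly the route the paper intends: the paper states this Corollary with no written proof, treating it as an immediate consequence of Proposition \ref{Collnorm-sigmarefinementsTheorem}, and your argument is precisely the spelled-out version of that deduction (point-finite partition of unity gives a point-finite open refinement of the cozero sets, then the Proposition yields $\mathcal{U}$-small partitions of unity for every cover, i.e.\ paracompactness).
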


\section{Discretization of covers}

The purpose of this section is to broaden the scope of the previous section and illuminate how partitions of unity can be used to unify all metrization theorems. 

In this section, star refinements are used to descritize open covers $\mathcal{U}$ into disjoint closed collections and disjoint open coarsenings of those collections. $\mathcal{U}$-small partitions of unity can be constructed analogously to those in section 3.

Our first construction is that of a discrete shrinking of a given well-ordered open cover $\mathcal{V}=\{V_s\}_{s\in S}$
to a discrete family $\{D_s\}_{s\in S}$ of closed sets so that each element of a given open cover $\mathcal{U}$
intersects at most one element of $\{D_s\}_{s\in S}$. 

\begin{Definition}
Suppose $\mathcal{U}$ is an open cover of a topological space $X$ and $\mathcal{V}=\{V_s\}_{s\in S}$ is a well-ordered collection of open subsets of $X$ (i.e. the index set $S$ is well-ordered). Let $\bar S=S\cup \{\infty\}$ is a well ordered set
obtained from $S$ by adding an element $\infty$ that is larger than all elements of $S$.
Define $\alpha:\mathcal{U}\to \bar S$ as follows: $\alpha(U)=\infty$ if there is no $s\in S$ with $U \subset V_s$.
Otherwise $\alpha(U)$ is the smallest $s\in S$ satisfying $U \subset V_s$.

The 
\textbf{discretization} $D(\mathcal{V},\mathcal{U})=\{D(\mathcal{V},\mathcal{U})_s\}_{s\in S}$ of $\mathcal{V}$
 with respect to $\mathcal{U}$ is defined as follows: $D(\mathcal{V},\mathcal{U})_s=X\setminus \bigcup\limits_{\alpha(U)\ne s}\{U: U \in \mathcal{U}\}$.
\end{Definition}

The following result confirms that the
discretization $D(\mathcal{V},\mathcal{U})$ is indeed a discrete closed shrinking of $\mathcal{V}$.

\begin{Lemma}\label{CNDiscClosedLemma}
$D(\mathcal{V},\mathcal{U})_s\subset V_s$ and $U\cap D(\mathcal{V},\mathcal{U})_s\ne\emptyset$ implies $\alpha(U)=s$ for each $s\in S$.
\end{Lemma}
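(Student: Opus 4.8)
The plan is to unwind the definition of the discretization and read the required membership condition straight off it. By definition $D(\mathcal{V},\mathcal{U})_s = X\setminus\bigcup\{U\in\mathcal{U} : \alpha(U)\ne s\}$, so a point $x$ lies in $D(\mathcal{V},\mathcal{U})_s$ precisely when \emph{every} member $U\in\mathcal{U}$ containing $x$ satisfies $\alpha(U)=s$. I would record this pointwise reformulation first, since both assertions of the lemma fall out of it.

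For the second assertion, suppose $U\cap D(\mathcal{V},\mathcal{U})_s\ne\emptyset$ and pick $x$ in the intersection. Then $x\in U$ and $x\in D(\mathcal{V},\mathcal{U})_s$, so the reformulation forces $\alpha(U)=s$ at once.

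For the inclusion $D(\mathcal{V},\mathcal{U})_s\subset V_s$, I would use that $\mathcal{U}$ is a cover. Take $x\in D(\mathcal{V},\mathcal{U})_s$; since $\mathcal{U}$ covers $X$ there is some $U\in\mathcal{U}$ with $x\in U$. By the reformulation (equivalently, by the second assertion applied to this $U$) we get $\alpha(U)=s$. As $s\in S$, the value $\alpha(U)=s$ is not $\infty$, so the definition of $\alpha$ genuinely yields $U\subset V_{\alpha(U)}=V_s$, whence $x\in V_s$.

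The argument is essentially bookkeeping, so there is no substantive obstacle; the only point requiring care is the logical translation of the complement-of-a-union definition into the pointwise statement ``$x\in D(\mathcal{V},\mathcal{U})_s$ iff every $U\in\mathcal{U}$ through $x$ has $\alpha(U)=s$'', together with the observation that the equality $\alpha(U)=s$ lands in $S$ rather than at $\infty$ and therefore actually certifies $U\subset V_s$. Keeping the adjoined value $\infty$ cleanly separated from the elements of $S$ is the one place where a slip could occur.
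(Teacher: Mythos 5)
Your proof is correct and follows essentially the same route as the paper: both unwind the definition $D(\mathcal{V},\mathcal{U})_s = X\setminus\bigcup\{U\in\mathcal{U}:\alpha(U)\ne s\}$ to get the star condition immediately, and both derive $D(\mathcal{V},\mathcal{U})_s\subset V_s$ by using that $\mathcal{U}$ is a cover together with the fact that $\alpha(U)=s\in S$ forces $U\subset V_s$. The paper phrases the inclusion contrapositively (a point outside $V_s$ lies in some $W$ with $\alpha(W)\ne s$, hence outside $D(\mathcal{V},\mathcal{U})_s$), but this is the same argument.
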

\begin{proof} If $\alpha(U)\ne s$, then $U\subset X\setminus D(\mathcal{V},\mathcal{U})_s$, so $U\cap D(\mathcal{V},\mathcal{U})_s\ne\emptyset$ implies $\alpha(U)=s$. Also, any $W\in\mathcal{U}$ containing a point outside of $V_s$ has $\alpha(W)\ne s$, resulting in
$W\subset X\setminus D(\mathcal{V},\mathcal{U})_s$. Thus, $D(\mathcal{V},\mathcal{U})_s\subset V_s$.
\end{proof}

\begin{Lemma}\label{CNDiscToRefinementLemma}
Suppose $\mathcal{V}=\{V_s\}_{s\in S}$ is a cover of $X$ and $ \{\mathcal{U}_n\}_{n\ge 1}$ is a sequence of open covers of $X$.  If for any $x\in V_s\in \mathcal{V}$ there is $n\ge 1$ satisfying $st(x,\mathcal{U}_n)\subset V_s$, then the union of discretizations 
$\bigcup D(\mathcal{V},\mathcal{U}_n)$ is a cover of $X$.
\end{Lemma}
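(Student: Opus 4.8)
The plan is to verify directly that every point of $X$ lies in some member of the family $\bigcup D(\mathcal{V},\mathcal{U}_n)$. The guiding idea is to let the well-ordering of $S$ single out a canonical index for each point, and then to feed that index into the star hypothesis so that the corresponding discretized set captures the point.

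First I would fix an arbitrary $x\in X$. Since $\mathcal{V}$ covers $X$, the set $\{s\in S:x\in V_s\}$ is nonempty, so by the well-ordering of $S$ it has a least element $s_0$; thus $x\in V_{s_0}$ while $x\notin V_{s'}$ for every $s'<s_0$. Applying the hypothesis to $x\in V_{s_0}$ produces an index $n\ge 1$ with $st(x,\mathcal{U}_n)\subset V_{s_0}$. I would then claim that $x\in D(\mathcal{V},\mathcal{U}_n)_{s_0}$, which by the definition of the discretization amounts to showing that every $U\in\mathcal{U}_n$ containing $x$ satisfies $\alpha(U)=s_0$.

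To establish this, take any $U\in\mathcal{U}_n$ with $x\in U$. Then $U\subset st(x,\mathcal{U}_n)\subset V_{s_0}$, so $\alpha(U)\le s_0$ by the minimality built into the definition of $\alpha$. Conversely, if $\alpha(U)=s'<s_0$ then $U\subset V_{s'}$ and hence $x\in V_{s'}$, contradicting the minimality of $s_0$. Therefore $\alpha(U)=s_0$, and since this holds for every $U\in\mathcal{U}_n$ through $x$, the point $x$ belongs to no $U$ with $\alpha(U)\ne s_0$; this is precisely the condition defining $D(\mathcal{V},\mathcal{U}_n)_{s_0}$, so $x\in\bigcup D(\mathcal{V},\mathcal{U}_n)$.

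The step I expect to matter most is the correct selection of the index $s_0$ as the \emph{least} element rather than an arbitrary index with $x\in V_{s_0}$: it is exactly this choice that rules out $\alpha(U)<s_0$ and forces the equality $\alpha(U)=s_0$. Beyond that the argument is essentially bookkeeping, and I do not anticipate a genuine obstacle, since the well-ordering makes the choice of index canonical and the star condition supplies the containment $U\subset V_{s_0}$ in one stroke.
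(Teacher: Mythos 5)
Your proposal is correct and follows exactly the paper's own argument: choose the least $s_0$ with $x\in V_{s_0}$, invoke the star hypothesis to get $n$ with $st(x,\mathcal{U}_n)\subset V_{s_0}$, and observe that every $U\in\mathcal{U}_n$ containing $x$ then has $\alpha(U)=s_0$, so $x\in D(\mathcal{V},\mathcal{U}_n)_{s_0}$. You simply spell out the minimality check for $\alpha(U)$ that the paper leaves implicit.
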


\begin{proof}
Given $x\in X$, let $s\in S$ be the smallest element so that $x\in V_s$. There is $n$ such that $st(x,\mathcal{U}_n)\subset V_s$. Now, $x$ belongs to the element of $D(\mathcal{V},\mathcal{U}_n)$ indexed by $s$.
Indeed, $\alpha(U)=s$ for any $U\in \mathcal{U}_n$ containing $x$.
\end{proof}

\begin{Proposition}\label{ExistenceSigmaDiscreteRefinementProp}
If an open cover $\mathcal{U}$ of a topological space $X$ has a $\mathcal{U}$-small partition of unity, then $\mathcal{U}$ admits a $\sigma$-discrete closed refinement.
\end{Proposition}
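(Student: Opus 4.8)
The plan is to feed the given partition of unity into the discretization machinery of Lemmas \ref{CNDiscClosedLemma} and \ref{CNDiscToRefinementLemma}. Let $f:X\to l_1(S)$ be a $\mathcal{U}$-small partition of unity. First I would well-order $S$ and set $\mathcal{V}=\{V_s\}_{s\in S}$ with $V_s=f^{-1}(st(s))$. This is an open cover of $X$: each $st(s)=\{g:g(s)\ne 0\}$ is open in $l_1(S)$ and $f$ is continuous, while $f(x)\in\Delta(S)$ always has a positive coordinate, so the $V_s$ cover $X$. Since $f$ is $\mathcal{U}$-small, each $V_s$ lies inside an element of $\mathcal{U}$, so $\mathcal{V}$ refines $\mathcal{U}$. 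It therefore suffices to produce a $\sigma$-discrete closed cover that shrinks $\mathcal{V}$.

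The next step is to choose the auxiliary covers $\mathcal{U}_n$ required by Lemma \ref{CNDiscToRefinementLemma}. Rather than iterating derivatives, I would simply pull back small balls of the $l_1$-metric: set $\mathcal{U}_n=\{f^{-1}(B(f(x),2^{-n})):x\in X\}$, which is an open cover of $X$ because $f$ is continuous. I would then form the discretizations $D(\mathcal{V},\mathcal{U}_n)$ and take their union over $n$. By Lemma \ref{CNDiscClosedLemma} each $D(\mathcal{V},\mathcal{U}_n)$ is a discrete family of closed sets with $D(\mathcal{V},\mathcal{U}_n)_s\subset V_s$, so $\bigcup_n D(\mathcal{V},\mathcal{U}_n)$ is automatically a $\sigma$-discrete closed family that shrinks $\mathcal{V}$, and hence refines $\mathcal{U}$.

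Everything then reduces to checking that $\bigcup_n D(\mathcal{V},\mathcal{U}_n)$ actually covers $X$, that is, the hypothesis of Lemma \ref{CNDiscToRefinementLemma}: for every $s$ and every $x\in V_s$ there is $n$ with $st(x,\mathcal{U}_n)\subset V_s$. This is the one substantive point, and it is where the choice of $\mathcal{U}_n$ pays off. Fix $x\in V_s$, so $f_s(x)=a>0$. A triangle-inequality argument shows $st(x,\mathcal{U}_n)\subset f^{-1}(B(f(x),2^{-n+1}))$: any member of $\mathcal{U}_n$ containing $x$ has the form $f^{-1}(B(f(x'),2^{-n}))$ with $\|f(x)-f(x')\|_1<2^{-n}$, and every $y$ in it satisfies $\|f(y)-f(x)\|_1<2^{-n+1}$. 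Choosing $n$ with $2^{-n+1}\le a$ then forces $|f_s(y)-f_s(x)|\le\|f(y)-f(x)\|_1<a$, hence $f_s(y)>0$ and $y\in V_s$. This verifies the hypothesis and finishes the proof.

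The part that looks as though it should be the obstacle is controlling how the stars shrink, and it is exactly here that one is tempted to invoke the iterated barycentric star refinements of Proposition \ref{ExistenceOfStarRef}, whose mesh need not shrink uniformly in infinite dimensions. The main idea of the plan is that this difficulty is avoidable: because $l_1(S)$ is genuinely metric and $f$ is continuous, pulling back metric balls yields auxiliary covers whose stars shrink in the $l_1$-sense at \emph{every} point, so the discretization lemmas apply directly. The only care needed is to index $\mathcal{V}$ by a well-ordered set, as required by the definition of $D(\mathcal{V},\mathcal{U}_n)$, and to keep the inequality $f_s(y)>0$ strict, both of which are handled above.
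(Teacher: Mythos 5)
Your proof is correct and follows essentially the same route as the paper: cover $X$ by the sets $f^{-1}(st(s))$, pull back a sequence of open covers from the metric space $l_1(S)$ whose stars shrink inside each $st(s)$, and feed everything into Lemmas \ref{CNDiscClosedLemma} and \ref{CNDiscToRefinementLemma}. The only difference is that you build the auxiliary covers explicitly as pullbacks of $2^{-n}$-balls and check the star-shrinking hypothesis by the $l_1$ triangle inequality, where the paper delegates this step to Theorem \ref{StarTheorem} applied to $l_1(S)\setminus\{0\}$; your explicit verification is, if anything, the more self-contained of the two.
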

\begin{proof}
Choose a $\mathcal{U}$-small partition of unity $f:X \to l_1(S)$. Set $\mathcal{V} =\{ st(s): s \in S \}$ and notice that $\mathcal{V}$ is a cover of the metrizable space $l_1(V) \setminus \{0\}$. By using Theorem \ref{StarTheorem}, $l_1(V) \setminus \{0\}$ has a sequence of open covers $\mathcal{U}_n$ such that for any $x \in U \in \mathcal{V}$ there is $n \ge 1$ satisfying $st(x,\mathcal{U}_n)\subset U$. Pulling these covers back to $X$, we get a sequence of open covers $\mathcal{V}_n$ such that for any $y \in V \in \mathcal{U}$ there is $n \ge 1$ so that $st(x,\mathcal{V}_n) \subset V$. Using lemma \ref{CNDiscToRefinementLemma} and Lemma \ref{CNDiscClosedLemma} we can use discretizations to get a $\sigma$-discrete closed refinement of $\mathcal{U}$.
\end{proof}

\begin{Theorem}\label{SubmainDiscreteTheorem}
Suppose $X$ is collectionwise normal and $\mathcal{U}$ is an open cover of $X$. $\mathcal{U}$-small partitions of unity exists if there is a sequence of covers $\mathcal{U}_n$ such that for any $x\in U\in \mathcal{U}$ there is $n\ge 1$ satisfying $st(x,\mathcal{U}_n)\subset U$. 
\end{Theorem}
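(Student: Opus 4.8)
The plan is to manufacture a $\sigma$-discrete closed refinement of $\mathcal{U}$ and then invoke Corollary \ref{SigmaDiscreteRefinementProp}, which, under collectionwise normality, upgrades any such refinement to a $\mathcal{U}$-small partition of unity. The discretization machinery of this section is built precisely for this purpose.

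First I would well-order the index set and regard $\mathcal{U}=\{U_s\}_{s\in S}$ as the well-ordered cover $\mathcal{V}$ appearing in the definition of discretization. For each $n\ge 1$ I would form the discretization $D(\mathcal{V},\mathcal{U}_n)=\{D(\mathcal{V},\mathcal{U}_n)_s\}_{s\in S}$. Each member is closed, being the complement of a union of open sets, and Lemma \ref{CNDiscClosedLemma} gives $D(\mathcal{V},\mathcal{U}_n)_s\subset U_s$, so every such family refines $\mathcal{U}$. Discreteness of $\{D(\mathcal{V},\mathcal{U}_n)_s\}_{s\in S}$ is again delivered by Lemma \ref{CNDiscClosedLemma}: it forces $\alpha(U)=s$ whenever $U\in\mathcal{U}_n$ meets $D(\mathcal{V},\mathcal{U}_n)_s$, so each $U\in\mathcal{U}_n$ meets at most one member of the family; since $\mathcal{U}_n$ covers $X$, every point sits in such a $U$, and this open neighborhood witnesses discreteness.

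To see that the union $\bigcup_n D(\mathcal{V},\mathcal{U}_n)$ is in fact a cover, I would apply Lemma \ref{CNDiscToRefinementLemma} with $\mathcal{V}=\mathcal{U}$; its hypothesis, that for each $x\in V_s$ there is $n$ with $st(x,\mathcal{U}_n)\subset V_s$, is exactly the standing hypothesis of the theorem. Hence $\bigcup_n D(\mathcal{V},\mathcal{U}_n)$ is a $\sigma$-discrete closed cover refining $\mathcal{U}$, and Corollary \ref{SigmaDiscreteRefinementProp} completes the proof. The argument is mostly an assembly of the section's lemmas; the one point demanding care is the discreteness step, where it is essential that each $\mathcal{U}_n$ be a genuine cover so that the open set containing a given point also serves as the neighborhood meeting at most one $D(\mathcal{V},\mathcal{U}_n)_s$.
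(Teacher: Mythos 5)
Your proposal is correct and follows essentially the same route as the paper: well-order $\mathcal{U}$, use Lemmas \ref{CNDiscClosedLemma} and \ref{CNDiscToRefinementLemma} to produce the $\sigma$-discrete closed refinement $\bigcup_n D(\mathcal{U},\mathcal{U}_n)$, and then invoke collectionwise normality to get the partition of unity (the paper cites Proposition \ref{SigmaDiscreteOpenRefinementProp} after explicitly coarsening, while you cite Corollary \ref{SigmaDiscreteRefinementProp}, which packages the same step). Your verification of discreteness and closedness is a welcome bit of added detail that the paper leaves implicit.
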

\begin{proof}
($\Rightarrow$ ) Well order the cover $\mathcal{U} = \{U_s\}_{s\in S}$. Using Lemmas \ref{CNDiscClosedLemma} and \ref{CNDiscToRefinementLemma}, $\mathcal{U}$ has a $\sigma$-discrete closed refinement $\bigcup\limits_{n\in \NN} D(\mathcal{U},\mathcal{U}_n)$. Using collectionwise normality, $D(\mathcal{U},\mathcal{U}_n)$ has a discrete open coarsening of $\mathcal{V}_n $. Apply \ref{SigmaDiscreteOpenRefinementProp}.

($\Leftarrow$) This direction follows from Proposition \ref{StarTheorem}.
\end{proof}

\begin{Corollary}[Bing Criterion \cite{En}] \label{BingCriterion}
Suppose $\{\mathcal{U}_n\}_{n\ge 1}$ is a sequence of open covers of $X$ such that $\{st(x,\mathcal{U}_n)\}$ is a basis at $x$ for each $x\in X$. If $X$ is collectionwise normal, then it is metrizable.
\end{Corollary}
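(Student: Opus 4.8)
The plan is to build a single continuous partition of unity on $X$ whose star-preimages form a basis and then to quote Dydak's Metrization Theorem \ref{DydMetrThm}. The first move is to feed each individual cover into Theorem \ref{SubmainDiscreteTheorem}. Fix $m\ge 1$ and write $\mathcal{U}_m=\{U_s\}_{s\in S_m}$. Saying that $\{st(x,\mathcal{U}_n)\}_{n}$ is a basis at every $x$ means exactly that whenever a point $x$ lies in an element $U\in\mathcal{U}_m$ there is some $n\ge 1$ with $st(x,\mathcal{U}_n)\subset U$, since such a $U$ is an open neighborhood of $x$. This is precisely the hypothesis of Theorem \ref{SubmainDiscreteTheorem} applied to the cover $\mathcal{U}_m$, with the same development $\{\mathcal{U}_n\}$ serving as the refining sequence. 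Collectionwise normality therefore yields a $\mathcal{U}_m$-small partition of unity, which I regard as a continuous map $f^m\colon X\to l_1(S_m)$ with components $\{f^m_s\}_{s\in S_m}$.

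Next I would amalgamate these. Put $V=\bigsqcup_{m\ge 1}S_m$ and define $f\colon X\to l_1(V)$ by $f(x)(m,s)=f^m_s(x)/2^m$. Because each $f^m$ takes values in $\Delta(S_m)$, the coordinates of $f(x)$ sum to $\sum_{m\ge 1}2^{-m}=1$, so $f$ already lands in $\Delta(V)$ and no normalization is required. Continuity of $f$ is then the uniform-tail estimate already used to prove Version 2 of Urysohn's Metrization Theorem: the partial sums over $m\le M$ are continuous, while the $l_1$-norm of the tail is at most $\sum_{m>M}2^{-m}$, which tends to $0$ uniformly in $x$.

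The decisive step is to check that $\{f^{-1}(st(v))\}_{v\in V}$ is a basis for $X$. For $v=(m,s)$ the set $f^{-1}(st(v))$ is exactly the cozero set $(f^m_s)^{-1}((0,1])$, which by $\mathcal{U}_m$-smallness sits inside some member of $\mathcal{U}_m$. Given $x$ and an open $W\ni x$, the basis hypothesis furnishes $n$ with $st(x,\mathcal{U}_n)\subset W$; since $f^n$ is a partition of unity there is $s\in S_n$ with $f^n_s(x)>0$, and the cozero set of $f^n_s$ both contains $x$ and lies in an element of $\mathcal{U}_n$, which must therefore contain $x$ and hence sit inside $st(x,\mathcal{U}_n)\subset W$. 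Thus $x\in f^{-1}(st(n,s))\subset W$, so the star-preimages of $f$ do form a basis, and Dydak's Metrization Theorem \ref{DydMetrThm} delivers metrizability.

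I expect the main obstacle to be this final basis verification rather than any computation: the argument hinges on reading off, from $\mathcal{U}_n$-smallness, that a cozero set meeting $x$ is trapped in a single element of $\mathcal{U}_n$ running through $x$, and hence inside $st(x,\mathcal{U}_n)$. A secondary point to keep in mind is separation, since metrizability forces $X$ to be at least $T_0$; but the basis condition is exactly what guarantees that $f$ separates topologically distinguishable points, so nothing beyond the standing separation hypotheses is needed.
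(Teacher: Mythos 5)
Your proposal is correct and follows essentially the same route as the paper: apply Theorem \ref{SubmainDiscreteTheorem} to each cover $\mathcal{U}_m$ (using the basis hypothesis to supply the star condition), combine the resulting $\mathcal{U}_m$-small partitions of unity with weights $2^{-m}$, and invoke Dydak's Metrization Theorem \ref{DydMetrThm}. You merely spell out the continuity and basis verifications that the paper leaves implicit.
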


\begin{proof}
This follows from Theorem \ref{SubmainDiscreteTheorem} and Dydak's Metrization Theorem \ref{DydMetrThm}. Indeed, in this case any open cover of $X$ admits a partition of unity. In particular, each $\mathcal{U}_n$ admits a partition of unity $f_n$. Those, when combined into one partition of unity $f$ result in $f$ having point-inverses of stars of vertices being a basis of $X$. By Dydak's Metrization Theorem \ref{DydMetrThm}, $X$ is metrizable.
\end{proof}

Given an open cover $\mathcal{U}$ of $X$ let $st(\mathcal{U})$ be the cover $\{st(x,\mathcal{U})\}_{x\in X}$ of $X$ by stars of $\mathcal{U}$.

\begin{Theorem}\label{MainDiscretTheorem}
Suppose $X$ is normal and $\mathcal{U}$ is an open cover of $X$. $\mathcal{U}$-small partitions of unity exists if and only if there is a sequence of open covers $\mathcal{U}_n$ such that for any $x\in X$ and any open neighborhood $U\in \mathcal{U}$ of $x$ there is $n\ge 1$ satisfying $st(x,st(\mathcal{U}_n))\subset U$.
\end{Theorem}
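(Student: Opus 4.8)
The plan is to prove both implications by reducing to machinery already in place, the substance lying entirely in the forward direction. The key observation is that the stated double-star condition for $\{\mathcal{U}_n\}$ is literally the hypothesis of Lemma \ref{CNDiscToRefinementLemma} for the sequence of \emph{star covers} $\{st(\mathcal{U}_n)\}_{n\ge1}$: for any $x$ and any $U\in\mathcal{U}$ with $x\in U$ there is $n$ with $st(x,st(\mathcal{U}_n))\subset U$. This suggests discretizing $\mathcal{U}$ against the coarser covers $st(\mathcal{U}_n)$ while fattening with the finer covers $\mathcal{U}_n$, the gap between the two being exactly the room that will let mere normality do the work done by collectionwise normality in Theorem \ref{SubmainDiscreteTheorem}. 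The converse (existence of the partition of unity implies such a sequence) I would dispatch with Theorem \ref{StarTheorem}: a $\mathcal{U}$-small partition of unity yields a sequence with $\mathcal{U}_1=\mathcal{U}$ and $\mathcal{U}_{n+1}$ star-refining $\mathcal{U}_n$, whence $st(x,st(\mathcal{U}_{n+1}))\subset st(x,\mathcal{U}_n)$; since these stars are nested and shrink into members of $\mathcal{U}$, the double-star condition follows.

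For the forward direction I would well-order $\mathcal{U}=\{U_s\}_{s\in S}$ and, for each $n$, form the discretization $E_{s,n}:=D(\mathcal{U},st(\mathcal{U}_n))_s$. By Lemma \ref{CNDiscClosedLemma} (applied to the cover $st(\mathcal{U}_n)$) the family $\{E_{s,n}\}_s$ is a discrete closed shrinking of $\mathcal{U}$, and every member of $st(\mathcal{U}_n)$ meets at most one $E_{s,n}$. I then fatten with the finer cover by setting $G_{s,n}:=st(E_{s,n},\mathcal{U}_n)=\bigcup\{V\in\mathcal{U}_n : V\cap E_{s,n}\ne\emptyset\}$. Two facts are immediate: first $G_{s,n}\subset U_s$, because any such $V$ lies in a single star $st(y,\mathcal{U}_n)$ meeting $E_{s,n}$, which Lemma \ref{CNDiscClosedLemma} forces into $U_s$; second, and crucially, the family $\{G_{s,n}\}_s$ is \emph{pairwise disjoint}, since a common point $q\in G_{s,n}\cap G_{t,n}$ would put two members of $\mathcal{U}_n$ inside the single star $st(q,\mathcal{U}_n)$, making that star meet both $E_{s,n}$ and $E_{t,n}$, contrary to Lemma \ref{CNDiscClosedLemma}. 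Finally, Lemma \ref{CNDiscToRefinementLemma} applied to $\{st(\mathcal{U}_n)\}$ shows $\bigcup_n\bigcup_s E_{s,n}=X$.

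With the disjoint open sets $G_{s,n}$ in hand I would avoid discrete open coarsenings altogether and build functions directly. For each $n$ the set $\bigcup_s E_{s,n}$ is closed (the family is discrete) and sits inside the open set $\bigcup_s G_{s,n}$, so normality furnishes a single Urysohn function $\varphi_n\colon X\to[0,1]$ with $\varphi_n\equiv1$ on $\bigcup_s E_{s,n}$ and $\varphi_n\equiv0$ off $\bigcup_s G_{s,n}$. Define $f_{s,n}$ to agree with $\varphi_n$ on $G_{s,n}$ and to vanish elsewhere; this is continuous because $\partial G_{s,n}$, being disjoint from every $G_{t,n}$, lies where $\varphi_n=0$. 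Since the $G_{s,n}$ are disjoint, at each point at most one $f_{s,n}$ (for fixed $n$) is nonzero and $\sum_s f_{s,n}=\varphi_n$, so $x\mapsto(f_{s,n}(x))_s$ is a continuous map into $l_1(S)$ of norm $\varphi_n$. Then $g:=\sum_n\sum_s f_{s,n}/2^n=\sum_n\varphi_n/2^n$ is continuous, bounded by $1$, and positive (each $x$ lies in some $E_{s,n}$, where $\varphi_n=1$). Normalizing the family $\{f_{s,n}/2^n\}$ by $g$ yields a partition of unity whose cozero set indexed by $(s,n)$ lies in $G_{s,n}\subset U_s$; this is the desired $\mathcal{U}$-small partition of unity.

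The main obstacle, and the whole point of the double star, is the second step of the forward direction together with the realization that \emph{disjointness} of $\{G_{s,n}\}_s$ is already enough. Discretizing against $\mathcal{U}_n$ itself would only produce overlapping fattenings; it is the passage to the star cover $st(\mathcal{U}_n)$ that buys pairwise disjointness. One is tempted to push for discrete open families so as to quote Proposition \ref{SigmaDiscreteOpenRefinementProp}, but with only normality this is not available, and indeed the $G_{s,n}$ need not be locally finite. The resolution is that a single Urysohn function glues the disjoint pieces into a continuous $l_1$-valued block, so that no local finiteness, hence no collectionwise normality, is required; normality alone suffices. Verifying continuity of this block map into $l_1$ at the boundary points where $\varphi_n$ vanishes is the one calculation I would carry out with care.
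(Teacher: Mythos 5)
Your forward direction is correct and follows the same basic strategy as the paper --- discretize the well-ordered cover $\mathcal{U}$ against a double-star cover built from $\mathcal{U}_n$, then fatten the resulting discrete closed sets by starring with $\mathcal{U}_n$ --- but the execution differs in two ways worth noting. First, the paper discretizes against $\mathcal{V}_n=\{st(U,\mathcal{U}_n)\}_{U\in\mathcal{U}_n}$ (stars of \emph{elements}) rather than your $st(\mathcal{U}_n)$ (stars of \emph{points}); this choice is what lets the paper conclude that the fattened family $\{st(D(\mathcal{U},\mathcal{V}_n)_s,\mathcal{U}_n)\}_s$ is genuinely \emph{discrete}, since any $U\in\mathcal{U}_n$ meeting two of them forces $st(U,\mathcal{U}_n)\in\mathcal{V}_n$ to meet two of the closed sets, contradicting Lemma \ref{CNDiscClosedLemma}. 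The paper then simply quotes Proposition \ref{SigmaDiscreteOpenRefinementProp}. Your remark that discrete open families ``are not available'' under mere normality is therefore not quite right --- here discreteness comes for free from the star structure, not from collectionwise normality --- but your weaker conclusion (pairwise disjointness of the $G_{s,n}$) is correctly proved and is genuinely all you need, because your second deviation, the single Urysohn function $\varphi_n$ glued over the disjoint pieces to form a continuous $l_1$-block, replaces Proposition \ref{SigmaDiscreteOpenRefinementProp} entirely. That construction checks out (the boundary argument for continuity of $f_{s,n}$ and of the block map into $l_1(S)$ is sound), so your proof is self-contained where the paper's leans on Section 4; the paper's version buys brevity and reuse of its $\sigma$-discrete machinery, yours buys independence from it. One further caveat, which applies equally to the paper: the converse via Theorem \ref{StarTheorem} produces, for each $x$, \emph{some} member of $\mathcal{U}$ containing $st(x,st(\mathcal{U}_n))$, not the particular $U\in\mathcal{U}$ prescribed in the statement; your nesting argument does not repair this, so the ``only if'' direction really only establishes the existentially quantified form of the condition --- but since you reproduce the paper's own treatment here, this is a defect of the statement rather than of your proposal.
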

\begin{proof}

The main idea is to discretize the cover $\mathcal{U}$ into countable many discrete open collections. Well order the cover $\mathcal{U} = \{U_s:s\in S\}$. 
Let $\mathcal{V}_n=\{st(U,\mathcal{U}_n)\}_{U\in \mathcal{U}_n}$.
Consider discretizations $D(\mathcal{U},\mathcal{V}_n)$. First of all we need $\bigcup\limits_{n=1}^\infty D(\mathcal{U},\mathcal{V}_n)$ to be a cover of $X$.
Indeed, given $x\in X$ choose the smallest $s\in S$ such that $x\in U_s$. Pick $n\ge 1$ satisfying $st(x,st(\mathcal{U}_n))\subset U_s$.
That implies $st(V,st(\mathcal{U}_n))\subset U_s$ for every $V\in \mathcal{U}_n$ containing $x$ and $s$ is the smallest element of $S$ so that
$st(V,st(\mathcal{U}_n))$ is contained in $U_s$. Thus $x\in D(\mathcal{U},\mathcal{V}_n)_s$.

To complete the proof using \ref{SigmaDiscreteOpenRefinementProp} it suffices to show that the stars of elements of $D(\mathcal{U},\mathcal{V}_n)$ with respect
to $\mathcal{U}_n$ form a discrete family. Indeed, suppose $U\in \mathcal{U}_n$ intersects two different sets $st(D(\mathcal{U},\mathcal{V}_n)_s,\mathcal{U}_n)$
and $st(D(\mathcal{U},\mathcal{V}_n)_t,\mathcal{U}_n)$. That means $st(U,\mathcal{U}_n)$ intersects both $D(\mathcal{U},\mathcal{V}_n)_s$
and $D(\mathcal{U},\mathcal{V}_n)_t$. That is not possible.

$(\Leftarrow)$ This direction follows from \ref{StarTheorem}.

\end{proof}

The following is a version of Bing Criterion \ref{BingCriterion} for $T_0$ spaces.
\begin{Theorem}\label{AStarStarMetrTheorem}
A $T_0$ space $X$ is metrizable if and only if there is a sequence of open covers $\mathcal{U}_n$ such that for any $x\in X$
the sets $\{st(x,st(\mathcal{U}_n))\}_{n\ge 1}$ form a basis at $x$. 
\end{Theorem}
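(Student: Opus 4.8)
The plan is to handle the two implications separately, with the reverse implication carrying essentially all the weight; the forward implication is a routine ball computation and the reverse one is a version of the Bing Criterion \ref{BingCriterion} in which the passage from a single star to a double star is exactly what lets me drop the collectionwise normality hypothesis in favor of $T_0$.

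For the forward implication, suppose $d$ metrizes $X$ and set $\mathcal{U}_n=\{B(y,2^{-n}):y\in X\}$. If $x\in B(y,2^{-n})$ then $B(y,2^{-n})\subset B(x,2^{1-n})$, so $st(x,\mathcal{U}_n)\subset B(x,2^{1-n})$; consequently any member $st(y,\mathcal{U}_n)$ of $st(\mathcal{U}_n)$ containing $x$ satisfies $d(x,y)<2^{1-n}$ and hence lies in $B(x,2^{2-n})$, giving $st(x,st(\mathcal{U}_n))\subset B(x,2^{2-n})$. Since also $B(x,2^{-n})\subset st(x,\mathcal{U}_n)\subset st(x,st(\mathcal{U}_n))$, the double stars are squeezed between concentric balls and form a basis at $x$.

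For the reverse implication I would first extract cheap separation. If $z\in\overline{st(x,\mathcal{U}_n)}$, then the open neighborhood $st(z,\mathcal{U}_n)$ of $z$ meets $st(x,\mathcal{U}_n)$, and chasing the two witnessing members of $\mathcal{U}_n$ yields $z\in st(x,st(\mathcal{U}_n))$; thus $\overline{st(x,\mathcal{U}_n)}\subset st(x,st(\mathcal{U}_n))$. Fed into the hypothesis this produces, for every neighborhood of $x$, an open neighborhood whose closure still lies inside it, so $X$ is regular and, being $T_0$, Hausdorff. Note too that $st(x,\mathcal{U}_n)\subset st(x,st(\mathcal{U}_n))$ forces the single stars $\{st(x,\mathcal{U}_n)\}_n$ to form a basis at $x$ as well. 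The heart of the argument is to upgrade regularity to collectionwise normality, and here the double star is indispensable. Given a discrete family of closed sets $\{A_\lambda\}$, I would form the open cover $\mathcal{O}$ consisting of $O_\lambda=X\setminus\bigcup_{\mu\ne\lambda}A_\mu$ and $O_\infty=X\setminus\bigcup_\lambda A_\lambda$, and then run the discretization machinery of the previous section against the cover $st(\mathcal{U}_n)$ rather than $\mathcal{U}_n$. The double-star hypothesis is precisely what Lemma \ref{CNDiscToRefinementLemma} needs to make $\bigcup_n D(\mathcal{O},st(\mathcal{U}_n))$ a cover, while Lemma \ref{CNDiscClosedLemma} together with the discreteness argument in Theorem \ref{MainDiscretTheorem} shows that for each $n$ the sets $st(D(\mathcal{O},st(\mathcal{U}_n))_\gamma,\mathcal{U}_n)$ form a discrete family of open sets, each contained in $O_\gamma$. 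The $\lambda$-indexed members are therefore disjoint from every $A_\mu$ with $\mu\ne\lambda$ and together cover $A_\lambda$, and combining them across the levels $n$ into one discrete open expansion $\{G_\lambda\}$ completes the proof of collectionwise normality.

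With collectionwise normality established the argument closes immediately: the single stars form a basis, so the Bing Criterion \ref{BingCriterion} applies and $X$ is metrizable. (Alternatively one may feed the double-star hypothesis directly into Theorem \ref{MainDiscretTheorem} to produce a $\mathcal{U}_m$-small partition of unity $f^{(m)}$ for each $m$, assemble $f=\{2^{-m}f^{(m)}_s\}_{m,s}$ into a partition of unity $X\to l_1(V)$, and observe that its star-inverses are the cozero sets of the $f^{(m)}$; given $x$ and a neighborhood $W$, choosing $N$ with $st(x,st(\mathcal{U}_N))\subset W$ places the cozero set of $f^{(N)}$ through $x$ inside $W$, so the star-inverses form a basis and Dydak's Metrization Theorem \ref{DydMetrThm} finishes.) I expect the genuine obstacle to be the separation step: because the covers $\mathcal{U}_n$ are not assumed to refine one another, the naive expansions indexed by a single point's chosen level fail, and one is forced to discretize against $st(\mathcal{U}_n)$ and to lean on the closure estimate $\overline{st(x,\mathcal{U}_n)}\subset st(x,st(\mathcal{U}_n))$ in order to peel off lower levels when merging the discrete open families into a single one.
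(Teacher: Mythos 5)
Your forward direction and your separation preliminaries are fine: the ball computation, the estimate $\overline{st(x,\mathcal{U}_n)}\subset st(x,st(\mathcal{U}_n))$, and the deduction that $X$ is regular and Hausdorff all check out. But your overall route diverges from the paper's, and the divergence is where the gap lies. The paper never proves collectionwise normality: it proves only \emph{plain} normality, by a short direct argument (for disjoint closed $A,B$, pick for each $x$ a level $n(x)$ at which $st(x,\mathcal{U}_{n(x)})$ misses $st(A,\mathcal{U}_{n(x)})$ or $st(B,\mathcal{U}_{n(x)})$, and check that the stars of the chosen cover elements separate $A$ from $B$), and then feeds the double-star hypothesis into Theorem \ref{MainDiscretTheorem}, whose whole point is that it needs only normality, finishing with Dydak's Metrization Theorem \ref{DydMetrThm}. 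Your parenthetical ``alternative'' is in fact the paper's main line; your main line is the harder detour through the Bing Criterion \ref{BingCriterion}.

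The detour has a concrete unresolved step. To make $\bigcup_n D(\mathcal{O},\cdot)$ catch every point of $\bigcup_\lambda A_\lambda$ using only the double-star hypothesis, you must discretize against the cover $st(\mathcal{U}_n)=\{st(y,\mathcal{U}_n)\}_{y\in X}$ by stars of \emph{points}, since every member of that cover containing $a$ lies in $st(a,st(\mathcal{U}_n))$. But then the discreteness argument you cite from Theorem \ref{MainDiscretTheorem} does not apply: that argument needs the discretizing cover to consist of stars of \emph{elements}, $\{st(U,\mathcal{U}_n)\}_{U\in\mathcal{U}_n}$, so that a single member $st(U,\mathcal{U}_n)$ meeting two expansions $st(D_\gamma,\mathcal{U}_n)$ and $st(D_\delta,\mathcal{U}_n)$ yields one element of the discretizing cover meeting both $D_\gamma$ and $D_\delta$. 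With stars of points you only obtain two \emph{different} witnesses $st(p,\mathcal{U}_n)$, $st(q,\mathcal{U}_n)$ ($p,q\in U$), and Lemma \ref{CNDiscClosedLemma} gives no contradiction; one can only conclude that the $st(D_\gamma,\mathcal{U}_n)$ are pairwise disjoint, not discrete. If instead you discretize against $\{st(U,\mathcal{U}_n)\}_U$ to rescue discreteness, then Lemma \ref{CNDiscToRefinementLemma} requires containment of $st(x,\{st(U,\mathcal{U}_n)\}_U)$, which is a \emph{triple} star of $x$, and the hypothesis controls only double stars. On top of this, the merging of the countably many level-$n$ discrete families into one discrete open expansion is asserted rather than carried out. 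None of this is needed if you follow the paper: prove normality directly and let Theorem \ref{MainDiscretTheorem} absorb the discretization work.
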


\begin{proof}
If $X$ is metrizable, then $\mathcal{U}_n$ consisting of open balls of radius $\frac{1}{n}$ gives the desired sequence of covers.

As long as $X$ is normal, by \ref{MainDiscretTheorem} $X$ is metrizable. Indeed, in this case any open cover of $X$ admits a partition of unity. In particular, each $\mathcal{U}_n$ admits a partition of unity $f_n$. Those, when combined into one partition of unity $f$ result in $f$ having point-inverses of stars of vertices being a basis of $X$. By Dydak's Metrization Theorem \ref{DydMetrThm}, $X$ is metrizable.

We may assume $\mathcal{U}_{n+1}$ is a refinement of $\mathcal{U}_n$ by taking intersections of the first $n$ covers.

Let us show that for any closed set $A$ of $X$ and any $x\notin A$ there is $n$ such that $st(x,\mathcal{U}_n)$ is disjoint with $st(A,\mathcal{U}_n)$.
That is accomplished by choosing $n$ so that $st(x,st(\mathcal{U}_n))\subset X\setminus A$.

Notice $X$ is Hausdorff. Indeed, for any two points $x\ne y$ in $X$ there is a neighborhood $U$ containing only one of them, say $x$.
Apply the above observation to $A=X\setminus U$. Hence $X$ is regular as well.

To prove normality of $X$ assume $A$ and $B$ are disjoint closed sets. For each $x\in X$ choose $n(x)\ge 1$ so that 
$st(x,\mathcal{U}_{n(x)})$ is contained in either $X\setminus st(A,\mathcal{U}_{n(x)})$ or in $X\setminus st(B,\mathcal{U}_{n(x)})$.  Let $V_x$ be an element of $\mathcal{U}_{n(x)}$ containing $x$. Put $\mathcal{V}=\{V_x\}_{x\in X}$. We claim $st(A,\mathcal{V})\cap st(B,\mathcal{V})=\emptyset$. Assume $z\in st(A,\mathcal{V})\cap st(B,\mathcal{V})$. Therefore we have $x\in A$ and $y\in B$ so that
$z\in V_y\cap V_x$. Without loss of generality assume $n(x)\ge n(y)$. In this case $z\in st(y,\mathcal{U}_{n(x)})\cap st(A,\mathcal{U}_{n(x)})$,
a contradiction.
 \end{proof}

\begin{Corollary}[Moore Metrization Theorem \cite{En}]
A $T_0$ space $X$ is metrizable if and only if there is a sequence of open covers $\mathcal{U}_n$ such that for any $x\in X$ and any open neighborhood $U$ of $x$ there is $n\ge 1$ and a neighborhood $V$ of $x$ satisfying $st(V,\mathcal{U}_n)\subset U$. 
\end{Corollary}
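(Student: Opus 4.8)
The plan is to reduce the statement to Theorem \ref{AStarStarMetrTheorem} by showing that, for a fixed sequence of open covers $\{\mathcal{U}_n\}$, the condition appearing here (for each $x$ and each open neighborhood $U$ of $x$ there are $n$ and a neighborhood $V\ni x$ with $st(V,\mathcal{U}_n)\subset U$) is equivalent to the requirement that $\{st(x,st(\mathcal{U}_n))\}_{n\ge1}$ be a basis at every point. The computational backbone is the identity
\[
st(x,st(\mathcal{U}_n))=st(st(x,\mathcal{U}_n),\mathcal{U}_n),
\]
which follows from the observation that $x\in st(y,\mathcal{U}_n)$ if and only if $y\in st(x,\mathcal{U}_n)$. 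Once the two conditions are shown to be equivalent, a $T_0$ space $X$ satisfies the displayed condition exactly when it satisfies the hypothesis of Theorem \ref{AStarStarMetrTheorem}, and metrizability follows.

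For the easy direction I would argue directly with the same covers. If $\{st(x,st(\mathcal{U}_n))\}$ is a basis, then given $x$ and $U$ I pick $n$ with $st(x,st(\mathcal{U}_n))\subset U$ and set $V=st(x,\mathcal{U}_n)$, which is an open neighborhood of $x$; the identity above yields $st(V,\mathcal{U}_n)=st(x,st(\mathcal{U}_n))\subset U$, so the Moore condition holds. In particular, if $X$ is metrizable then Theorem \ref{AStarStarMetrTheorem} provides a sequence with $\{st(x,st(\mathcal{U}_n))\}$ a basis, and the observation just made produces the required sequence; this settles the ``only if'' half.

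The converse --- deriving the double-star condition from the Moore condition --- is where the real work lies and is the step I expect to be the main obstacle. First I would pass to the nested sequence of common refinements $\mathcal{U}_1\wedge\cdots\wedge\mathcal{U}_n$ and rename, so that henceforth $\mathcal{U}_{n+1}$ refines $\mathcal{U}_n$; since $st(V,\mathcal{U}_{n+1})\subset st(V,\mathcal{U}_n)$ for refinements, the Moore condition survives this replacement. Applying the condition with $U$ itself as target shows that $st(x,\mathcal{U}_m)\subset U$ for a suitable $m$ (as $x\in V$ forces $st(x,\mathcal{U}_m)\subset st(V,\mathcal{U}_m)$), so the single stars already form a basis at each point. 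Now fix $x$ and $U$: the Moore condition gives $n$ and a neighborhood $V\ni x$ with $st(V,\mathcal{U}_n)\subset U$, and the single-star basis property together with the nesting lets me choose $m\ge n$ with $st(x,\mathcal{U}_m)\subset V$. Then
\[
st(x,st(\mathcal{U}_m))=st(st(x,\mathcal{U}_m),\mathcal{U}_m)\subset st(V,\mathcal{U}_m)\subset st(V,\mathcal{U}_n)\subset U,
\]
using monotonicity of the star operation in its set argument for the first inclusion and $\mathcal{U}_m$ refining $\mathcal{U}_n$ for the second. As double stars are open neighborhoods of $x$, this exhibits $\{st(x,st(\mathcal{U}_m))\}$ as a basis and Theorem \ref{AStarStarMetrTheorem} completes the argument. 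The delicate point to watch is purely one of indices: the $m$ supplied by the single-star basis step need not exceed the $n$ coming from the Moore step, so the nesting of the refined covers must be invoked to raise $m$ to $m\ge n$ without enlarging $st(x,\mathcal{U}_m)$; verifying that common refinements are open covers and that the star operations are monotone is then routine.
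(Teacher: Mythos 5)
Your proposal is correct and follows the same route as the paper: both directions reduce to Theorem \ref{AStarStarMetrTheorem}, with open balls of radius $\frac1n$ handling the metrizable case. The paper dismisses the implication from the Moore condition to the double-star basis condition with ``one readily sees''; you correctly supply the missing details (the identity $st(x,st(\mathcal{U}_n))=st(st(x,\mathcal{U}_n),\mathcal{U}_n)$, the passage to nested common refinements, and the index adjustment $m\ge n$), and your execution of these steps is sound.
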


\begin{proof}
If $X$ is metrizable, then $\mathcal{U}_n$ consisting of open balls of radius $\frac{1}{n}$ gives the desired sequence of covers.

Given such a sequence of covers one readily sees that for any $x\in X$
the sets $\{st(x,st(\mathcal{U}_n))\}_{n\ge 1}$ form a basis at $x$. Apply \ref{AStarStarMetrTheorem}.
 \end{proof}

\begin{Corollary}
[Arkhangelskii Metrization Theorem \cite{En}]
A space $X$ is metrizable if and only if it is $T_1$ and has a basis $\mathcal{B}$ with the property that 
for any $x\in X$ and any open neighborhood $U$ of $x$ there is a neighborhood $V$ of $x$ in $U$
so that only finitely many elements of $\mathcal{B}$ intersects both $V$ and $X\setminus U$.

\begin{proof}
($\Rightarrow$) Let $\mathcal{V}_n$ be a locally finite refinement of the cover of $X$ by $2^{-n}$ balls. Observe that $\mathcal{B} =\bigcup\limits_{n=1}^{\infty} \mathcal{V}_n$ is a basis for $X$. Let $x \in X$ and $U$ an open neighborhood of $x$. Let $n \ge 1$ be large enough so that for all $ m \ge 1$ no element of $\mathcal{V}_m$ intersects both the ball of radius $2^{-m}$ about $x$ and $X \setminus U$.  By the local finiteness of the covers $\mathcal{V}_n$ we can find an open set $V \subset B(x,2^{-n})$ so that only finitely many elements of $\bigcup\limits_{n=1}^{m}\mathcal{V}_n$ intersect $V$. 

($\Leftarrow$) Choose a basis $\mathcal{B}$ with the property that for any $x\in X$ and any open neighborhood $U$ of $x$ there is a neighborhood $V$ of $x$ in $U$ so that only finitely many elements of $\mathcal{B}$ intersects both $V$ and $X\setminus U$. We will build a sequence of covers $\mathcal{U}_n$ in order to apply Moore's Metrization Theorem.

Let the cover $U_1$ be all the maximal elements of $\mathcal{B}$. Inductively we can build a cover $\mathcal{U}_n$ by first removing all the elements of $\mathcal{U}_k$ for $k < n$ from $\mathcal{B}$ and setting $\mathcal{U}_n$ to be all the maximal elements in the remaining collection.

Notice that for any $x \in X$ and each neighborhood $U$ of $x$ there is an element $V \in \mathcal{B}$ so that $x \in V \subset U$ and only finitely many elements of $\mathcal{B}$ intersect $V$  and $X\setminus U$. Choose $n \ge 1$ large enough so that no element of $\mathcal{U}_n$ intersects both $V$ and $X \setminus U$ and notice that $st(V,\mathcal{U}_n) \subset U$.

\end{proof}

\end{Corollary}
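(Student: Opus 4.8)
The plan is to prove the two implications separately and to obtain the substantive direction as a consequence of Moore's Metrization Theorem proved just above (metrizability will follow once a suitable sequence of covers is produced, and the $T_1$ hypothesis supplies the $T_0$ needed to invoke it).

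For the forward implication I expect only routine work. A metrizable space is $T_1$, so it remains to build the base. I would fix a metric and, for each $n\ge 1$, use paracompactness of metric spaces to choose a locally finite open refinement $\mathcal{V}_n$ of the cover of $X$ by balls of radius $2^{-n}$, and set $\mathcal{B}=\bigcup_{n\ge 1}\mathcal{V}_n$, which is a base. Given $x$ and a neighborhood $U$ of $x$, choose $N$ so large that $B(x,2^{-N+1})\subset U$ and put $V=B(x,2^{-N})$. Every member of $\mathcal{V}_m$ with $m$ large that meets $V$ is contained in a ball of radius $2^{-m}$ centered near $x$, hence lies in $U$ and cannot meet $X\setminus U$; thus only the finitely many families $\mathcal{V}_1,\dots,\mathcal{V}_{N-1}$ can contribute members meeting both $V$ and $X\setminus U$. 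Shrinking $V$ so that it meets only finitely many members of each of these finitely many locally finite families leaves only finitely many members of $\mathcal{B}$ meeting both $V$ and $X\setminus U$, as required.

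For the reverse implication the real work lies in manufacturing a sequence of open covers $\mathcal{U}_n$ satisfying the hypothesis of Moore's Metrization Theorem. I would stratify the base $\mathcal{B}$ into pairwise level-disjoint subfamilies by assigning to $\mathcal{U}_1$ the maximal members of $\mathcal{B}$ under inclusion and, inductively, to $\mathcal{U}_n$ the maximal members of $\mathcal{B}\setminus\bigcup_{k<n}\mathcal{U}_k$, so that every member of $\mathcal{B}$ occupies exactly one level and larger sets occupy earlier levels. To verify Moore's condition, fix $x$ and a neighborhood $U$, and choose $V\in\mathcal{B}$ with $x\in V\subset U$ for which only finitely many members of $\mathcal{B}$ meet both $V$ and $X\setminus U$. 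These finitely many members occupy finitely many levels, all with index at most some $N$. For any $n>N$, any $W\in\mathcal{U}_n$ meeting $V$ cannot also meet $X\setminus U$, for otherwise $W$ would be one of the bad members and would have been placed at a level $\le N<n$; hence $W\subset U$, so $st(V,\mathcal{U}_n)\subset U$, which is exactly what Moore's Metrization Theorem requires.

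The step I expect to be the main obstacle is guaranteeing that each stratum $\mathcal{U}_n$ is genuinely a cover of $X$, since Moore's theorem demands covers while the "maximal member" recipe can leave a point uncovered at a given level (for instance when the basic neighborhoods of a point form a strictly increasing chain with no maximal element). I would handle this by checking, level by level, that every point still lies in some member of the remaining subfamily, using the base condition to produce a fresh basic neighborhood of each point inside previously selected ones; failing a clean maximal choice, one can replace "maximal" by a sufficiently large (cofinal) member chosen at each point while keeping the levels disjoint, which is all the counting argument actually uses. Once the covers are in hand, the remaining points ($T_1\Rightarrow T_0$, openness of the strata, and the finiteness count) are routine, and the result follows from Moore's Metrization Theorem; alternatively the same covers can be fed directly into Theorem \ref{AStarStarMetrTheorem}.
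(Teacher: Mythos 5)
Your proposal follows essentially the same route as the paper: the same locally finite refinements $\mathcal{V}_n$ of the $2^{-n}$-ball covers for the forward direction, and the same stratification of $\mathcal{B}$ into successive layers of maximal elements feeding the star condition into Moore's Metrization Theorem for the converse. The obstacle you flag --- that the strata $\mathcal{U}_n$ need not all be covers --- is genuine and is silently ignored by the paper's own proof: applying the base condition at a point $x\in B$ with $U=B$ shows each $B\in\mathcal{B}$ has only finitely many supersets in $\mathcal{B}$, so $\mathcal{U}_1$ (and each layer of maximal elements of whatever remains nonempty above a point) is well defined and $\mathcal{U}_1$ covers, but for $n\ge 2$ the residual family $\mathcal{B}\setminus\bigcup_{k<n}\mathcal{U}_k$ can fail to cover $X$ (e.g.\ a discrete space with $\mathcal{B}$ the singletons), so the padding of later levels that you sketch is indeed needed to legitimately invoke Moore's theorem.
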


\begin{Corollary}
[Alexandroff Criterion \cite{En}]
A space $X$ is metrizable if and only if it is collectionwise normal and has a basis $\mathcal{B}$ with the property that 
for any $x\in X$ and any open neighborhood $U$ of $x$ only finitely many elements of $\mathcal{B}$ intersects $X\setminus U$ and contain $x$.

\end{Corollary}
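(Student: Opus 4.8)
The plan is to prove both implications, treating the reverse direction—that collectionwise normality together with the stated base property (which is precisely the condition that $\mathcal{B}$ be a \emph{uniform base}) yields metrizability—as the substantial half, and to land it on the Bing Criterion \ref{BingCriterion}.

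For the forward direction, recall that a metrizable space is paracompact and hence collectionwise normal, so only the base must be produced. I would take, for each $n\ge 1$, a locally finite open cover $\mathcal{V}_n$ by sets of diameter $<1/n$ and set $\mathcal{B}=\bigcup_{n\ge 1}\mathcal{V}_n$, which is a basis. To check that $\mathcal{B}$ is uniform, fix $x$ and an open $U\ni x$, pick $\varepsilon>0$ with $B(x,\varepsilon)\subset U$, and note that any $B\in\mathcal{B}$ with $x\in B$ and $B\cap(X\setminus U)\ne\emptyset$ has diameter $\ge\varepsilon$, hence lies in some $\mathcal{V}_n$ with $1/n>\varepsilon$; there are finitely many such $n$, and the local finiteness of each $\mathcal{V}_n$ leaves only finitely many of its members containing $x$. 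This mirrors the forward half of the Arkhangelskii Criterion above.

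For the reverse direction I would first note that a collectionwise normal $T_1$ space is regular. The first genuine step is to extract metacompactness directly from the uniform base: given an open cover $\mathcal{W}$, set $\mathcal{B}'=\{B\in\mathcal{B}:B\subset W\text{ for some }W\in\mathcal{W}\}$ and let $\mathcal{M}$ be the family of members of $\mathcal{B}'$ maximal under inclusion. Fixing $x$, choose $B_0\in\mathcal{B}'$ with $x\in B_0$; the uniform base property (applied with $U=B_0$) shows that the members of $\mathcal{B}'$ containing $x$ and not contained in $B_0$ form a finite set. Hence upward chains through $x$ are finite, so maximal elements containing $x$ exist and cover $X$, and—since any such maximal element is either $B_0$ or one of the finitely many not contained in $B_0$—there are only finitely many of them. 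Thus $\mathcal{M}$ is a point-finite open refinement of $\mathcal{W}$ and $X$ is metacompact. (Via Proposition \ref{Collnorm-sigmarefinementsTheorem} this already produces a $\mathcal{W}$-small partition of unity for every $\mathcal{W}$, i.e. paracompactness, but not yet metrizability.)

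The crux, and the step I expect to be the main obstacle, is to upgrade the uniform base into a \emph{development}: a sequence of open covers $\mathcal{U}_n$ for which $\{st(x,\mathcal{U}_n)\}_{n\ge 1}$ is a local base at every $x$. The strategy is to iterate the metacompact refinement above, using regularity to pass at each stage to base elements whose closures sit inside members of the previous stage, and using point-finiteness to keep the orders under control; the uniform base property is then what forces star-shrinkage, since the finitely many base elements that escape a prescribed neighborhood of $x$ cannot persist through all levels, so $st(x,\mathcal{U}_n)\subset U$ for large $n$. The delicate point is exactly this star-shrinkage argument—organizing the iterated point-finite refinements so that the finiteness supplied by "only finitely many members containing $x$ escape $U$" is genuinely consumed. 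Once such a development is in hand, the conclusion is immediate: $X$ is collectionwise normal and $\{st(x,\mathcal{U}_n)\}$ is a basis at each point, so the Bing Criterion \ref{BingCriterion} gives that $X$ is metrizable.
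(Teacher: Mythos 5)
Your forward direction is fine and is essentially the paper's (it is the same construction as in the forward half of the Arkhangelskii Criterion). The genuine gap is in the reverse direction: the step you yourself flag as the crux --- upgrading the uniform base to a development so that the Bing Criterion \ref{BingCriterion} applies --- is never carried out. You describe a strategy (``iterate the metacompact refinement, use regularity, the uniform base property forces star-shrinkage'') but give no actual construction of the covers $\mathcal{U}_n$ and no proof that $st(x,\mathcal{U}_n)\subset U$ for large $n$; that star-shrinkage argument is precisely the hard combinatorial content of the classical theorem that a space with a uniform base is developable, and as written your proof has a hole exactly there. The route is viable in principle, but the part you postponed is the part that needed proving.

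The paper avoids the development entirely, and the device it uses is one you already produced. Instead of applying the ``maximal elements'' trick to an arbitrary cover $\mathcal{W}$ to get metacompactness, apply it to $\mathcal{B}$ itself, repeatedly: let $\mathcal{U}_1$ be the maximal elements of $\mathcal{B}$, and let $\mathcal{U}_n$ be the maximal elements of what remains after deleting $\mathcal{U}_1,\dots,\mathcal{U}_{n-1}$. Your own finiteness observation (two maximal elements through $x$ must each meet the complement of the other, and only finitely many members of $\mathcal{B}$ through $x$ do that) shows each $\mathcal{U}_n$ is point-finite, so Proposition \ref{Collnorm-sigmarefinementsTheorem} yields a $\mathcal{U}_n$-small partition of unity $\{f_s\}_{s\in S_n}$ for each $n$. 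Since $\bigcup_{n\ge 1}\mathcal{U}_n=\mathcal{B}$, normalizing the combined family $\{f_s/2^n: s\in S_n,\ n\ge 1\}$ gives a single partition of unity whose carriers form a basis of $X$, and Dydak's Metrization Theorem \ref{DydMetrThm} concludes immediately --- no development, no Bing Criterion, no star-shrinkage. In short, the ingredient you did prove (point-finiteness of the maximal elements) is the whole proof once it is aimed at $\mathcal{B}$ and at Theorem \ref{DydMetrThm} rather than at metacompactness of arbitrary covers.
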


\begin{proof}

($\Leftarrow$)  Choose a basis $\mathcal{B}$ with the property that for any $x\in X$ and any open neighborhood $U$ of $x$ there is a neighborhood $V$ of $x$ in $U$ so that only finitely many elements of $\mathcal{B}$ intersects both $V$ and $X\setminus U$. We will build a sequence of covers $\mathcal{U}_n$ for which we can apply Proposition \ref{Collnorm-sigmarefinementsTheorem} for each $n\ge 1$.

Let the cover $U_1$ be all the maximal elements of $\mathcal{B}$. Inductively we can build a cover $\mathcal{U}_n$ by first removing all the elements of $\mathcal{U}_k$ for $k < n$ from $\mathcal{B}$ and setting $\mathcal{U}_n$ to be all the maximal elements in the remianing collection.

Observe that $\mathcal{U}_n$ is point finite. Indeed, if $U$ and $V$ are in $\mathcal{U}_n$ and contain $x$ then by maximality $U \cap X\setminus V \neq \emptyset$ and $V \cap X\setminus U \neq \emptyset$. There are only finitely  many such elements for which that happens.

By Proposition \ref{Collnorm-sigmarefinementsTheorem} there is a $\mathcal{U}_n$-small partition of unity $\{f_s:X\to [0,1]\}_{s\in S_n}$. Observe $\bigcup\limits_{n=1}^{\infty}\mathcal{U}_n = \mathcal{B}$ and so $\{\frac{f_s}{2^n}:X\to [0,1]:s\in S_n$ and $n \in \mathbb{N}\}$ forms a partition of unity whose carriers form a basis for $X$. 

($\Rightarrow$) This direction follows from Arkhangelskii's Metrization Theorem.

\end{proof}

\end{document}